\newtheorem{Theorem}{Theorem}[section] 
\newtheorem{Prop}[Theorem]{Proposition}
\newtheorem{Cor}[Theorem]{Corollary}
\newtheorem{Lemma}[Theorem]{Lemma}
\newtheorem{Rem}[Theorem]{Remark}
\newtheorem{Ex}[Theorem]{Example}
\numberwithin{equation}{section}
\def\N{\mathbb{N}}
\def\R{\mathbb{R}}
\def\Q{\mathbb{Q}}
\def\P{\mathbb{P}}
\def\E{\mathbb{E}}
\def\X{\mathbb{X}}
\def\XX{\mathcal{X}}
\def\Y{\mathbb{Y}}
\def\YY{\mathcal{Y}}
\def\O{\mathcal{O}}
\def\F{\mathcal{F}}
\def\Cov{\mathrm{Cov}}
\def\hs{\hspace{1pt}}
\def\I{\mathbf{1}}
\begin{document}

\title{Concentration inequalities\\
for measures of a  Boolean model}

\date{\today}
\renewcommand{\thefootnote}{\fnsymbol{footnote}}

\author{Fabian Gieringer\footnotemark[1]\, and G\"unter Last\footnotemark[1]}

\footnotetext[1]{fabian.gieringer@kit.edu,  guenter.last@kit.edu,  
Karlsruhe Institute of Technology, Institute of Stochastics,
76128 Karlsruhe, Germany. }

\maketitle

\begin{abstract}
\noindent We consider a Boolean model $Z$ driven by a Poisson particle
process $\eta$ on a metric space $\Y$. We study the random variable
$\rho(Z)$, where $\rho$ is a (deterministic) measure on $\Y$. 
Due to the interaction of overlapping particles, the
distribution of $\rho(Z)$ cannot be described explicitly.
In this note we derive concentration inequalities for $\rho(Z)$.
To this end we first prove two concentration inequalities  for 
functions of a Poisson process on a general phase space. 
\end{abstract}

\noindent
{\em 2000 Mathematics Subject Classification.} 60D05, 60G55.

\noindent
{\em Key words and phrases.}  concentration inequality,
Poisson process, Boolean model, covariance identity

\section{Introduction}

Let $\Y$ be a locally compact separable metric space 
and let $\F$ be the space of all closed
subsets of $\Y$ equipped with a suitable $\sigma$-field.
Let $\eta$ be a Poisson process on $\F$ with a $\sigma$-finite
intensity measure $\Lambda$. If $\eta\{K\}>0$, then we write
$K\in\eta$ and say that $K$ is a {\em particle} of $\eta$.
The {\em Boolean model}
associated with $\eta$ is the random set $Z$ defined by
the union of all particles, that is
\begin{align*}
Z:=\bigcup_{K\in\eta} K.
\end{align*}
Let $\rho$ be a measure on $\Y$ satisfying
\begin{align*}
\int_\F \rho(K)\,\Lambda(dK)<\infty.
\end{align*}
Then $\rho(Z)$ is a finite random variable even though $Z$ might not be
a random closed set in the sense of \cite{Mat,SW}.

The random set $Z$ is a fundamental model of stochastic geometry and continuum percolation;
see \cite{SKM, Mat, SW}. Explicit formulae for the distribution
of geometric functionals of the Boolean model are not available, even
not in the simplest case of a stationary Boolean on $\R^d$ and
$\rho=\lambda_d(\cdot\cap W)$ being the restriction of Lebesgue measure to a convex and compact
set $W$. The reason for the absence of such formulae is the interaction between the
particles from $\eta$ caused by overlapping. 
One way out are moment formulae and central limit theorems; see e.g.\ \cite{HLS}
and \cite[Chapter 22]{LastPenrose17}.
In this paper we will prove {\em concentration inequalities} of the form
\begin{align*}
\P(F-\E[F]\ge r)\le\exp\Big(\inf_{s\ge 0}\int_0^s \varv(u)\,du-sr\Big),\quad r>0,
\end{align*}
where the function $\varv\colon[0,\infty)\to[0,\infty]$ is determined
by $\Lambda$ and $\rho$. In the stationary Euclidean case 
such inequalities were first proved in \cite{Hei}.
Our bounds improve these results. Moreover, we generalize the setting of \cite{Hei}
in several ways. First, we study the Boolean model on a metric space $\Y$ and not only
on $\R^d$. Second, we will allow that compact subsets of $\Y$ are intersected
by infinitely many Poisson particles. Hence, in general, the random set $Z$ is not closed
and its boundary might have fractal 
properties. 
Roughly speaking, this means that we can allow for a $\sigma$-finite
distribution of the typical grain. Closely related models of this type were introduced
in \cite{Zaehle84}, a seminal paper on fractal percolation,
that was almost completely ignored  in the later literature.
Third, we consider general measures and not only the volume.
Finally, our method allows to treat also Lipschitz functions of these
measures.

Similarly as in \cite{Hou,HP}
our approach is based on a covariance identity for square integrable
Poisson functionals. In fact we first prove a concentration inequality  for 
functions of a Poisson process on a general phase space. 
Using the log-Sobolev inequality, related concentration inequalities  
were derived in \cite{BLM03,BP,Wu}. 

\section{Concentration of Poisson functionals}\label{general}

Let $(\X,\XX)$ be a measurable space and let $\Lambda$
be a $\sigma$-finite measure on $\X$. Let $\eta$ be a Poisson process
on $\X$ with intensity measure $\Lambda$, 
defined over a probability space $(\Omega,\mathcal{A},\P)$; see \cite{LastPenrose17}.
In particular, $\eta$ is a point process, that is a measurable
mapping from $\Omega$ to the space $\mathbf{N}=\mathbf{N}(\X)$
of all $\sigma$-finite measures with values in $\bar{\N}_0:=\{\infty,0,1,2,\ldots\}$,
where $\mathbf{N}$ is equipped with the smallest $\sigma$-field
$\mathcal{N}$ such that $\mu\mapsto\mu(B)$ is measurable for all $B\in\XX$.
The distribution of $\eta$ is denoted by $\Pi_{\Lambda}:=\P(\eta\in\cdot)$.
Since we are only interested in distributional properties of $\eta$,
Corollary 6.5 in \cite{LastPenrose17} shows that it is no restriction
of generality to assume that $\eta$ is {\em proper}. This means  
that there exist random elements
$X_1,X_2,\ldots$ in $\X$ and an $\bar{\N}_0$-valued random
variable $\kappa$ such that almost surely
$\eta=\sum_{n=1}^\kappa\delta_{X_n}$.

Let $0\le t\le1$ and $Y_1,Y_2,\ldots$ be a sequence of independent
random variables with distribution $(1-t)\delta_0+t\delta_1$,
independent of $\eta$. Define
$\eta_t:=\sum_{n=1}^\kappa Y_n\delta_{X_n}$ as the {\it $t$-thinning} of $\eta$.
Then $\eta_t$ and $\eta-\eta_t$ are independent Poisson processes
with intensity measures $t\Lambda$ and $(1-t)\Lambda$, respectively. 
Given $x\in\X$ and a measurable function
$f\colon\mathbf{N}\to\R$, the {\it difference operator} $D_xf$ is defined
by
$$
D_xf(\mu):=f(\mu+\delta_x)-f(\mu),\quad \mu\in\mathbf{N}.
$$
This mapping is measurable since $(\mu,x)\mapsto\mu+\delta_x$ is
measurable. We call a random variable $F$ a {\it Poisson functional}
if there is a measurable $f\colon\mathbf{N}\to\R$ such that $F=f(\eta)$
almost surely. 
In this case we define
$$
D_xF:=D_xf(\eta),\quad x\in\X,
$$
(which is almost surely, for $\Lambda$-almost all $x$, independent of
the choice of an admissible $f$) and further a mapping
$DF\colon\Omega\times\X\to\R$, given by
$(\omega,x)\mapsto(D_xF)(\omega)$.

The starting point of our concentration inequalities is the following
covariance identity; see Theorem 20.2 in \cite{LastPenrose17}. The conditional
expectation appearing there can be dropped such that we get the
following identity. 

\begin{Prop}
Let $F=f(\eta)$ and $G=g(\eta)$ be Poisson functionals
such that $F,G\in L^2(\P)$ and $DF,DG\in L^2(\P\otimes\Lambda)$. Then
\begin{align}\label{193}
\Cov(F,G)=\E\Big[\int_{\X}D_xF\int_0^1\int_\mathbf{N}D_xg(\eta_t+\mu)\,
\Pi_{(1-t)\Lambda}(d\mu)\,dt\,\Lambda(dx)\Big].
\end{align}
\end{Prop}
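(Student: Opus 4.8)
The plan is to start from the covariance identity of \cite[Theorem~20.2]{LastPenrose17} and to evaluate explicitly the conditional expectation occurring there. Under exactly the assumptions $F,G\in L^2(\P)$ and $DF,DG\in L^2(\P\otimes\Lambda)$ of the Proposition, that theorem gives
$$
\Cov(F,G)=\int_0^1\E\Big[\int_{\X}D_xF\,\E\big[D_xg(\eta)\mid\eta_t\big]\,\Lambda(dx)\Big]\,dt ,
$$
where $\eta_t$ is the $t$-thinning introduced above and where, for $\Lambda$-a.e.\ $x$, the inner conditional expectation is well defined and does not depend on the admissible choice of $g$. Hence everything reduces to identifying $\E[D_xg(\eta)\mid\eta_t]$ with $\int_\mathbf{N}D_xg(\eta_t+\mu)\,\Pi_{(1-t)\Lambda}(d\mu)$, after which Fubini's theorem delivers \eqref{193}.

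To carry out this identification I would exploit the thinning structure recalled above: $\eta=\eta_t+(\eta-\eta_t)$, and $\eta_t$ and $\eta-\eta_t$ are independent Poisson processes with intensity measures $t\Lambda$ and $(1-t)\Lambda$. Consequently a regular version of the conditional distribution of $\eta$ given $\eta_t$ is obtained by adding to $\eta_t$ an independent $\Pi_{(1-t)\Lambda}$-distributed point process, i.e.\ $\E[h(\eta)\mid\eta_t]=\int_\mathbf{N}h(\eta_t+\mu)\,\Pi_{(1-t)\Lambda}(d\mu)$ a.s.\ for every measurable $h\colon\mathbf{N}\to\R$ for which $h(\eta)$ is integrable. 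Applying this with $h=D_xg$ --- noting $D_xg(\eta_t+\mu)=g(\eta_t+\mu+\delta_x)-g(\eta_t+\mu)$ --- yields, for $\Lambda$-a.e.\ $x$ and every $t\in[0,1]$,
$$
\E\big[D_xg(\eta)\mid\eta_t\big]=\int_\mathbf{N}D_xg(\eta_t+\mu)\,\Pi_{(1-t)\Lambda}(d\mu)\quad\text{a.s.}
$$
Substituting this into the previous display and reordering the $dt$-, $\Pi_{(1-t)\Lambda}(d\mu)$- and $\Lambda(dx)$-integrals gives the claim.

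The only points that need some care are measurability and integrability. By Fubini's theorem, $DG\in L^2(\P\otimes\Lambda)$ forces $D_xg(\eta)\in L^2(\P)\subseteq L^1(\P)$ for $\Lambda$-almost every $x$, so for such $x$ the conditional expectation is genuinely defined and the disintegration above applies to the signed but integrable integrand $D_xg$; the required joint measurability in $(\omega,x)$ is inherited from that of $(\mu,x)\mapsto\mu+\delta_x$ and of the thinning $(t,\omega)\mapsto\eta_t$. For the final application of Fubini one needs the product $D_xF\cdot\E[D_xg(\eta)\mid\eta_t]$ to be integrable with respect to $\P\otimes\Lambda\otimes(\text{Lebesgue on }[0,1])$; this follows from Jensen's inequality for conditional expectations together with two applications of the Cauchy--Schwarz inequality (one in $\omega$ and one in $x$) and $DF,DG\in L^2(\P\otimes\Lambda)$, which in particular handles the case of infinite $\Lambda$, where $L^2$ does not embed in $L^1$. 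I expect this bookkeeping, rather than any conceptual difficulty, to be the main --- and essentially the only --- work.
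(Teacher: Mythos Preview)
Your proposal is correct and is precisely the approach the paper indicates: the paper does not give a detailed proof but simply refers to \cite[Theorem~20.2]{LastPenrose17} and remarks that ``the conditional expectation appearing there can be dropped,'' which is exactly what you carry out by disintegrating $\E[D_xg(\eta)\mid\eta_t]$ via the independence of $\eta_t$ and $\eta-\eta_t$. Your treatment of the measurability and integrability issues (conditional Jensen plus Cauchy--Schwarz to justify Fubini) is the natural way to make this rigorous.
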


For $F=f(\eta)\in L^2(\P)$ we define
\begin{equation*}
s_F:=\sup\big\{s\ge0:e^{sF}\in L^2(\P), De^{sF}\in L^2(\P\otimes\Lambda)\big\},
\end{equation*}
where the case $s_F=\infty$ is possible.
Define
\begin{equation*}
V_F(s):=\int_{\X}\big(e^{sD_x F}-1\big)\int_0^1\int_{\mathbf{N}}D_xf(\eta_t+\mu)\,
\Pi_{(1-t)\Lambda}(d\mu)\,dt\,\Lambda(dx),\quad 0\le s<s_F.
\end{equation*}

The following bound for the cumulant-generating function is the
main result of this section.

\begin{Theorem}\label{main} Let $F=f(\eta)\in L^2(\P)$ and $s\in(0,s_F)$. Then
\begin{align}\label{haesslich}
  \log\E\big[e^{s(F-\E[F])}\big] 
&\le\inf_{0<\theta<1}\frac{\theta}{1-\theta}
\Big(\int_0^s\Big(\frac1s\log\E\big[e^{sV_F(u)/\theta}\big]-\frac{1}{u}\log\E\big[e^{uF}\big]\Big)\,du+s\hs\E[F]\Big)\\
&\le\inf_{0<\theta<1}\frac{\theta}{s(1-\theta)}\int_0^s\log\E\big[e^{sV_F(u)/\theta}\big]\,du.\label{schoen}
\end{align}
\end{Theorem}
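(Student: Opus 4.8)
The plan is to derive and then integrate a first-order differential inequality for the centred cumulant-generating function
\[
L(u):=\log\E\bigl[e^{u(F-\E[F])}\bigr]=\log\E[e^{uF}]-u\,\E[F].
\]
For $u\in[0,s_F)$ the function $L$ is finite, continuous and, being a log-Laplace transform of a centred random variable, convex, with $L(0)=L'(0)=0$; in particular $L\ge0$ and $u\mapsto L(u)/u$ is nondecreasing. Differentiating under the integral sign, which is licit for $u<s_F$ thanks to the integrability built into the definition of $s_F$, gives $\frac{d}{du}\E[e^{uF}]=\E[Fe^{uF}]$, so that $L'(u)=\Cov(e^{uF},F)/\E[e^{uF}]$. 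Next I would apply the covariance identity \eqref{193} with the functionals $e^{uF}$ and $F$ in the roles of $F$ and $G$: since $D_x(e^{uF})=e^{uF}\bigl(e^{uD_xF}-1\bigr)$ and $e^{uF}$ does not depend on the integration variable, \eqref{193} gives $\Cov(e^{uF},F)=\E[e^{uF}V_F(u)]$, whence
\[
L'(u)=\frac{\E[e^{uF}V_F(u)]}{\E[e^{uF}]}=\E_{\P_u}\!\bigl[V_F(u)\bigr],\qquad\text{with}\quad\frac{d\P_u}{d\P}:=\frac{e^{uF}}{\E[e^{uF}]}.
\]

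The heart of the argument is to estimate $\E_{\P_u}[V_F(u)]$ via the entropy (Gibbs variational) inequality: for every probability measure $Q\ll\P$ and every $\lambda>0$ one has $\E_Q[W]\le\lambda^{-1}\bigl(\log\E[e^{\lambda W}]+D(Q\,\|\,\P)\bigr)$, where $D$ denotes relative entropy. Taking $Q=\P_u$, $W=V_F(u)$ and $\lambda=s/\theta$ with $\theta\in(0,1)$, and using $D(\P_u\,\|\,\P)=u\,\E_{\P_u}[F]-\log\E[e^{uF}]=uL'(u)-L(u)$, this rearranges to the pointwise inequality
\[
\Bigl(\tfrac{s}{\theta}-u\Bigr)L'(u)+L(u)\le\log\E\bigl[e^{sV_F(u)/\theta}\bigr],\qquad 0<u<s
\]
(trivially valid whenever the right-hand side equals $+\infty$). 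Dividing by $s$, integrating over $u\in(0,s)$, and integrating $\int_0^s(\tfrac{s}{\theta}-u)L'(u)\,du$ by parts — the boundary term vanishes at $u=0$ because $L(0)=0$ and equals $\tfrac{s(1-\theta)}{\theta}L(s)$ at $u=s$, and the surviving $\int_0^s L$ term combines with the $\int_0^s L$ already present — one obtains
\[
\frac{1-\theta}{\theta}\,L(s)+\frac{2}{s}\int_0^s L(u)\,du\le\frac{1}{s}\int_0^s\log\E\bigl[e^{sV_F(u)/\theta}\bigr]\,du.
\]

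It remains to trade $\tfrac{2}{s}\int_0^s L$ against $\int_0^s L(u)/u\,du$. With $h(u):=L(u)/u$ nondecreasing and $\int_0^s(2u-s)\,du=0$, the function $(2u-s)\bigl(h(u)-h(s/2)\bigr)$ is nonnegative on $(0,s)$, so $\int_0^s(2u-s)h(u)\,du\ge0$, i.e.\ $\int_0^s L(u)/u\,du\le\tfrac{2}{s}\int_0^s L(u)\,du$. Substituting this, writing $\int_0^s L(u)/u\,du=\int_0^s u^{-1}\log\E[e^{uF}]\,du-s\,\E[F]$, and rearranging yields \eqref{haesslich} upon taking the infimum over $\theta\in(0,1)$. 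Inequality \eqref{schoen} then follows at once: Jensen gives $\log\E[e^{uF}]\ge u\,\E[F]$, hence $-\int_0^s u^{-1}\log\E[e^{uF}]\,du+s\,\E[F]\le0$, and this nonpositive term may simply be discarded. I expect the real work to be technical rather than conceptual — justifying the differentiation of $u\mapsto\log\E[e^{uF}]$ and the validity of \eqref{193} on the whole interval $[0,s]$ (which is exactly what the cut-off $s_F$ is designed for), together with the harmless case $\E[e^{sV_F(u)/\theta}]=\infty$; the genuinely new inputs are the entropy inequality run at parameter $s/\theta$ and the elementary monotonicity estimate for $\int_0^s L(u)/u\,du$.
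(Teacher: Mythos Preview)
Your proof is correct and rests on the same two pillars as the paper's argument: the covariance identity $\Cov(F,e^{uF})=\E[e^{uF}V_F(u)]$ and the entropy (Gibbs variational) inequality, which is precisely Massart's Lemma~11 invoked by the paper. The difference is purely in the bookkeeping. The paper applies the entropy bound with parameter $u$, obtaining
\[
\E_{\P_u}\!\big[V_F(u)/\theta\big]\le \frac{1}{u}\Big(\log\E\big[e^{uV_F(u)/\theta}\big]+D(\P_u\|\P)\Big),
\]
and then uses the convexity of $t\mapsto\log\E[e^{tV_F(u)}]$ (with value $0$ at $t=0$) to replace $\log\E[e^{uV_F(u)/\theta}]$ by $\tfrac{u}{s}\log\E[e^{sV_F(u)/\theta}]$ before integrating $h'(u)$ directly. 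You instead run the entropy bound with parameter $s/\theta$ from the outset, which bypasses the convexity step but produces the mixed expression $(s/\theta-u)L'(u)+L(u)$; to reach \eqref{haesslich} you then need an integration by parts together with the Chebyshev-type estimate $\int_0^s L(u)/u\,du\le\frac{2}{s}\int_0^s L(u)\,du$ based on the monotonicity of $u\mapsto L(u)/u$. The paper's route is slightly shorter; yours trades the convexity input for an elementary rearrangement argument. Both arrive at exactly the same inequality, and the passage from \eqref{haesslich} to \eqref{schoen} via Jensen is identical.
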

\begin{proof} We combine the idea of the proof of Lemma 3.1 in \cite{HP}
(see also the proof of Theorem 1 in \cite{Hou}) with Lemma 11 in Massart \cite{Mas}.
Let $\theta\in(0,1)$ and $s\in(0,s_F)$ be such that
$\E\big[e^{sV_F(u)/\theta}\big]<\infty$ and let $u\in(0,s]$. Since
$u<s_F$, we can use the covariance identity \eqref{193} to obtain that
\begin{align*}
  \Cov\big(F,e^{u F}\big)&=\E\Big[\int_{\X}\int_0^1\big(D_xe^{u F}\big)\int_{\mathbf{N}}D_xf(\eta_t+\mu)\,\Pi_{(1-t)\Lambda}(d\mu)\,dt\,\Lambda(dx)\Big]=\E\big[e^{uF}\hs
  V_F(u)\big].
\end{align*}
Now, Lemma 11 of Massart \cite{Mas} applied to $V_F(u)/\theta$ and $F$ yields
\begin{align*}
  \frac{\E\big[e^{uF}\hs
    V_F(u)\big]}{\E\big[e^{uF}\big]}
&\le\frac{\theta\hs\E\big[e^{uF} F\big]}{\E\big[e^{uF}\big]}+\frac{\theta}{u}\log\E\big[e^{uV_F(u)/\theta}\big]
-\frac\theta  u\log\E\big[e^{uF}\big].
\end{align*}
The combination of the last two displays leads to the inequality
\begin{equation*}
  \frac{\E\big[F\hs e^{uF}\big]}{\E\big[e^{uF}\big]}-\E[F]
=\frac{\Cov(F,e^{u F})}{\E\big[e^{uF}\big]}\le\frac{\theta\hs\E\big[e^{uF}\hs F\big]}{\E\big[e^{uF}\big]}
+\frac\theta u\log\E\big[e^{uV_F(u)/\theta}\big]-\frac\theta u\log\E\big[e^{uF}\big]
\end{equation*}
and a simple rearrangement yields
\begin{align*}
  \frac{\E\big[Fe^{uF}\big]}{\E\big[e^{uF}\big]}&\le\frac{\theta}{u(1-\theta)}
\Big(\frac{u}{\theta}\hs\E[F]+\log\E\big[e^{uV_F(u)/\theta}\big]-\log\E\big[e^{uF}\big]\Big).
\end{align*}
Setting $h(t):=\log\E\big[e^{tF}\big]$ and $g_u(t):=\log\E\big[e^{tV_F(u)}\big]$, $t\ge0$, we have
\begin{align*}
h(s)=h(0)+\int_0^sh'(u)\,du&=\int_0^s\frac{\E\big[F\hs e^{uF}\big]}{\E\big[e^{uF}\big]}\,du\\
&\le \int_0^s\frac{\theta}{u(1-\theta)}\Big(\frac{u}{\theta}\hs\E[F]+g_u(u/\theta)-h(u)\Big)\,du.
\end{align*}
By $g_u(0)=0$ and the convexity of $g_u$, we have 
$g_u(\frac{u}{s}\hs t)\le \frac{u}{s}\hs g_u(t)$ for $t>0$, thus
\begin{align*}
h(s)&\le\frac{s}{1-\theta}\E[F]+\frac{\theta}{1-\theta}\int_0^s\Big(\frac1sg_u(s/\theta)-\frac{1}{u}h(u)\Big)\,du.
\end{align*}
From $\log\E\big[e^{s(F-\E[F])}\big]=h(s)-s\hs\E[F]$ and the preceding
inequality, \eqref{haesslich} follows. Using Jensen's inequality, this
simplifies to \eqref{schoen}.
\end{proof}

Theorem \ref{main} and the well-known Chernoff bound (see \cite{Che}) 
\begin{align}\label{Chernoff}
\P(F-\E[F]\ge r)\le\inf_{s>0}\frac{\E[e^{s(F-\E[F])}]}{e^{sr}},\quad r\ge 0,
\end{align}
(a direct consequence of Markov's inequality) imply a concentration inequality.
If $V_F(\cdot)$ has a deterministic bound, 
this inequality can be simplified as follows.

\begin{Cor}\label{maincor} Let $F=f(\eta)\in L^2(\P)$ and
assume that $\varv\colon[0,s_F)\to\R$ is a measurable function
such that almost surely $V_F(s)\le \varv(s)$ for each $s\in[0,s_F)$. Then,
\begin{equation*}
\P(F-\E[F]\ge r)\le\exp\Big(\inf_{0<s<s_F}\int_0^s\varv(u)\,du-sr\Big),\quad r\ge0.
\end{equation*}
\end{Cor}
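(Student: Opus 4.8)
The plan is to derive the corollary as a direct specialization of Theorem~\ref{main} combined with the Chernoff bound~\eqref{Chernoff}. First I would start from the simpler estimate~\eqref{schoen}, namely
\[
\log\E\big[e^{s(F-\E[F])}\big]\le\inf_{0<\theta<1}\frac{\theta}{s(1-\theta)}\int_0^s\log\E\big[e^{sV_F(u)/\theta}\big]\,du,
\]
valid for every $s\in(0,s_F)$. The hypothesis is that $V_F(u)\le\varv(u)$ almost surely for each $u$, where $\varv$ is deterministic; since $x\mapsto e^{sx/\theta}$ is increasing for $s,\theta>0$, this gives the pointwise bound $e^{sV_F(u)/\theta}\le e^{s\varv(u)/\theta}$, hence $\E\big[e^{sV_F(u)/\theta}\big]\le e^{s\varv(u)/\theta}$ and therefore $\log\E\big[e^{sV_F(u)/\theta}\big]\le s\varv(u)/\theta$.

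Substituting this into~\eqref{schoen} yields
\[
\log\E\big[e^{s(F-\E[F])}\big]\le\inf_{0<\theta<1}\frac{\theta}{s(1-\theta)}\int_0^s\frac{s\,\varv(u)}{\theta}\,du
=\inf_{0<\theta<1}\frac{1}{1-\theta}\int_0^s\varv(u)\,du.
\]
Now the factor $\frac{1}{1-\theta}$ is minimized as $\theta\downarrow0$, where it tends to $1$; since the inequality holds for every $\theta\in(0,1)$, passing to the infimum gives $\log\E\big[e^{s(F-\E[F])}\big]\le\int_0^s\varv(u)\,du$. (One should note that if $\int_0^s\varv(u)\,du=+\infty$ for some $s$ the bound is vacuous there, and if $\varv$ takes the value $+\infty$ on a set of positive measure the same is true; in the relevant applications $\varv$ is finite and locally integrable.) Taking logarithms in~\eqref{Chernoff} then gives, for every $r\ge0$ and every $s\in(0,s_F)$,
\[
\P(F-\E[F]\ge r)\le\exp\Big(\log\E\big[e^{s(F-\E[F])}\big]-sr\Big)\le\exp\Big(\int_0^s\varv(u)\,du-sr\Big),
\]
and taking the infimum over $s\in(0,s_F)$ on the right-hand side yields the claimed bound. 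The case $r=0$ is covered since the empty infimum convention or the trivial bound $\P(F-\E[F]\ge0)\le1$ applies; more simply one checks that the exponent can be made $\le0$ by letting $s\downarrow0$.

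I do not expect any serious obstacle here: the entire argument is a monotonicity substitution followed by the limit $\theta\downarrow0$ and an application of the Chernoff bound. The only points requiring a little care are the measurability of $u\mapsto\varv(u)$ (assumed in the statement, so that $\int_0^s\varv(u)\,du$ is well-defined in $[0,\infty]$), the justification that the pointwise almost-sure bound $V_F(u)\le\varv(u)$ can be applied simultaneously for (Lebesgue-almost) all $u\in(0,s)$ inside the integral — which follows by Fubini/Tonelli since the exceptional null set depends measurably on $u$ — and the observation that~\eqref{schoen} already presupposes $s<s_F$, so the integrability conditions needed to invoke Theorem~\ref{main} are in force. Everything else is routine.
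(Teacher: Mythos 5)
Your proof is correct and follows essentially the same route as the paper: substitute the deterministic bound $\varv$ into \eqref{schoen}, then remove the $\theta$-dependence and apply the Chernoff bound \eqref{Chernoff}. The only cosmetic difference is that the paper first observes $\int_0^s\varv(u)\,du\ge 0$ (otherwise one would get $\P(F-\E[F]\ge 0)\le 0$) and then uses $\inf_{0<\theta<1}(1-\theta)^{-1}=1$, whereas your limit $\theta\downarrow 0$ reaches the same conclusion while sidestepping the sign discussion.
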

\begin{proof}
Let $r\ge0$ and $s\in (0,s_F)$. By the Chernoff bound \eqref{Chernoff}, inequality \eqref{schoen} 
and assumption $V_F(s)\le \varv(s)$, we get
\begin{align*}
  \P(F-\E[F]\ge  r)
&\le\E\big[e^{s(F-\E[F])}\big]e^{-sr}\le\exp\Big(\inf_{0<\theta<1}\frac{1}{1-\theta}\int_0^s\varv(u)\,du-sr\Big).
\end{align*}
We have $\int_0^s\varv(u)\,du\ge0$ since the contrary would lead to
$\P(F-\E[F]\ge 0)\le 0$ which is obviously wrong. 
Since $\inf_{0<\theta<1}(1-\theta)^{-1}=1$, we obtain that
\begin{equation}\label{foreachs}
\P(F-\E[F]\ge r)\le\exp\Big(\int_0^s\varv(u)\,du-sr\Big),\quad s\in[0,s_F),
\end{equation}
and hence the assertion.
\end{proof}

\begin{Rem}
\rm Concentration inequalities for the lower tail can be derived analogously. Under the obvious integrability assumptions, the bounds (\ref{haesslich}) and (\ref{schoen}) hold again upon replacing $F$ by $-F$ and $V_F$ by $V_{-F}$. Thus, by the Chernoff bound $\P(F-\E[F]\le -r)\le\inf_{s>0}\E[e^{-s(F-\E[F])}]e^{-sr}$, $r\ge0$, a result analogous to Corollary \ref{maincor} gives a bound for the lower tail when $V_{-F}$ has a deterministic bound. Hence, all results relying on Corollary \ref{maincor} can be given for the lower tail as well. 
\end{Rem}


Our next result was motivated by a question in \cite{BP} whether the
Mecke formula (cf.\ \cite{LastPenrose17}) can be combined with
the covariance identity to yield reasonable concentration inequalities.

\begin{Theorem}\label{thmecke}
Let $F=f(\eta)\in L^2(\P)$ be such that $DF\ge0$ holds $(\P\otimes\Lambda)$-almost
everywhere. Assume further that there exist 
a measurable function $g\colon \X\to[0,\infty)$
and constants $a>0$ and $b\ge0$ such that a.s.
\begin{equation}
  \int_0^1\int_\mathbf{N}D_xf(\eta_t+\mu)\,\Pi_{(1-t)\Lambda}(d\mu)\,dt\le g(x),
\quad\Lambda\text{-a.e.\ $x\in\X$},\label{c1as}
\end{equation}
and 
\begin{align}\label{e2.10}
\int D_xf(\eta-\delta_x)g(x)\,\eta(dx)\le aF+b.
\end{align}
Then 
\begin{equation}\label{toopt}
  \P(F-\E[F]\ge r)\le\exp\bigg(\inf_{0<s<a^{-1}\wedge s_F}\bigg(-\frac{bs}{a}
-s\hs\E[F]-\bigg(\frac{\E[F]}{a}+\frac{b}{a^2}\bigg)\log(1-as)-sr\bigg)\bigg).
\end{equation}
In particular, if $a^{-1}\le s_F$, we have
\begin{align}\label{e2.11}
  \P(F-\E[F]\ge
  r)&\le\exp\bigg(-\frac1{a}\bigg[r+\bigg(\E[F]+\frac{b}{a}\bigg)
\log\bigg(\frac{\E[F]+b/a}{r+\E[F]+b/a}\bigg)\bigg]\bigg).
\end{align}
\end{Theorem}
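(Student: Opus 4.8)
\emph{Proof plan.} The plan is to bound the covariance $\Cov(F,e^{uF})$ for $0<u<a^{-1}\wedge s_F$ by $u\,\E[(aF+b)e^{uF}]$, convert this into a differential inequality for the cumulant generating function $h(u):=\log\E[e^{uF}]$, integrate, and finish with the Chernoff bound \eqref{Chernoff}. Before doing so I would record two sign facts that follow from $DF\ge0$ and that make the later interchanges harmless. Since $\eta_t+\mu$ has law $\Pi_\Lambda$ whenever $\mu$ has law $\Pi_{(1-t)\Lambda}$ and $\eta_t$ is the independent $t$-thinning, the hypothesis $D_xf(\eta)\ge0$ $(\P\otimes\Lambda)$-a.e.\ implies that, $\Lambda$-a.e., the inner average $\int_0^1\int_{\mathbf N}D_xf(\eta_t+\mu)\,\Pi_{(1-t)\Lambda}(d\mu)\,dt$ is a.s.\ nonnegative, hence by \eqref{c1as} a.s.\ between $0$ and $g(x)$; and the Mecke equation applied to $(x,\mu)\mapsto\I\{f(\mu)-f(\mu-\delta_x)<0\}$ gives $D_xf(\eta-\delta_x)\ge0$ for $\eta$-a.e.\ $x$, a.s.

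Now fix $u\in(0,s_F)$. As in the proof of Theorem \ref{main}, the covariance identity \eqref{193} yields $\Cov(F,e^{uF})=\E[e^{uF}\,V_F(u)]$, and since $D_xF\ge0$ one has $e^{uF}(e^{uD_xF}-1)=D_xe^{uF}\ge0$, so that the sign facts together with \eqref{c1as} give
\begin{align*}
\Cov(F,e^{uF})&=\E\Big[\int_\X e^{uF}\big(e^{uD_xF}-1\big)\int_0^1\int_{\mathbf N}D_xf(\eta_t+\mu)\,\Pi_{(1-t)\Lambda}(d\mu)\,dt\,\Lambda(dx)\Big]\le\E\Big[\int_\X D_xe^{uF}\,g(x)\,\Lambda(dx)\Big].
\end{align*}
Next, the elementary bound $e^{\beta}-e^{\alpha}\le(\beta-\alpha)e^{\beta}$ for $\alpha\le\beta$, used with $\alpha=uf(\eta)\le\beta=uf(\eta+\delta_x)$, gives $D_xe^{uF}\le u\,D_xf(\eta)\,e^{uf(\eta+\delta_x)}$; applying the Mecke equation (cf.\ \cite{LastPenrose17}) to the nonnegative kernel $\varphi(x,\mu):=\big(f(\mu)-f(\mu-\delta_x)\big)e^{uf(\mu)}g(x)$, which satisfies $\varphi(x,\eta+\delta_x)=D_xf(\eta)\,e^{uf(\eta+\delta_x)}g(x)$, and then invoking \eqref{e2.10} yields
\begin{align*}
\E\Big[\int_\X D_xe^{uF}\,g(x)\,\Lambda(dx)\Big]\le u\,\E\Big[e^{uF}\int_\X D_xf(\eta-\delta_x)\,g(x)\,\eta(dx)\Big]\le u\,\E\big[(aF+b)\,e^{uF}\big].
\end{align*}
Hence $\Cov(F,e^{uF})\le u\,\E[(aF+b)\,e^{uF}]$.

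To conclude, I would rearrange $\Cov(F,e^{uF})=\E[Fe^{uF}]-\E[F]\E[e^{uF}]$ against the last bound to get $(1-ua)\E[Fe^{uF}]\le(\E[F]+ub)\E[e^{uF}]$, so that for $u\in(0,a^{-1}\wedge s_F)$
\begin{align*}
h'(u)=\frac{\E[Fe^{uF}]}{\E[e^{uF}]}\le\frac{\E[F]+ub}{1-ua}=\frac{\E[F]+b/a}{1-ua}-\frac{b}{a}.
\end{align*}
Integrating from $0$ to $s\in(0,a^{-1}\wedge s_F)$ with $h(0)=0$ gives $h(s)\le-\tfrac{bs}{a}-\big(\tfrac{\E[F]}{a}+\tfrac{b}{a^{2}}\big)\log(1-as)$; substituting $\log\E[e^{s(F-\E[F])}]=h(s)-s\,\E[F]$ into \eqref{Chernoff} and taking $\inf_{0<s<a^{-1}\wedge s_F}$ gives \eqref{toopt}. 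If in addition $a^{-1}\le s_F$, the exponent in \eqref{toopt} equals $-s(r+c)-\tfrac{c}{a}\log(1-as)$ with $c:=\E[F]+b/a$; for $r>0$ this function of $s$ is minimized over $(0,1/a)$ at $s=r/(a(r+c))$, and substituting this value produces the closed form \eqref{e2.11} (the case $r=0$ being trivial). The only real obstacle is the integrability bookkeeping: one must check that $u<s_F$ genuinely licenses the use of \eqref{193} (i.e.\ $e^{uF}\in L^2(\P)$ and $De^{uF}\in L^2(\P\otimes\Lambda)$), that $\E[|F|e^{uF}]<\infty$ for $u<s_F$ so that $h$ is differentiable with the stated derivative, and that the applications of Tonelli's theorem and of the Mecke equation to the integrands above are valid — which is precisely where the nonnegativity of $DF$, of $D_xf(\eta-\delta_x)$ and of the inner average of $D_xf(\eta_t+\mu)$ is used.
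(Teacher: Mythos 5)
Your proof is correct and follows essentially the same route as the paper's: the covariance identity plus \eqref{c1as} to get $\Cov(F,e^{uF})\le\E\big[\int D_xe^{uF}g(x)\,\Lambda(dx)\big]$, then the Mecke equation combined with $e^z-1\le ze^z$ (you apply the elementary bound before Mecke, the paper after --- the two orders are interchangeable) to reach $u\,\E[(aF+b)e^{uF}]$, followed by the same differential inequality for $h$, integration, Chernoff bound and optimization at $s=r/(a(r+c))$. The only detail the paper adds that you omit is the one-line exclusion of the degenerate case $\E[F]+b/a=0$ (where your optimizer would sit at the endpoint $a^{-1}$), which is dispatched by noting that \eqref{e2.10} then forces $F$ to be a.s.\ constant.
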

\begin{proof}
Let $0<u\le s<a^{-1}\wedge s_F$. By the covariance identity \eqref{193} and assumption \eqref{c1as}, we have
\begin{align*}
\Cov\big(F, e^{u F}\big)\le \hs\E\Big[\int D_xe^{u F}g(x)\,\Lambda(dx)\Big].
\end{align*}
Applying the Mecke formula and the elementary bound $e^z-1\le ze^z$, $z\in\R$, yields
\begin{align*}
  \Cov\big(F, e^{u F}\big)&\le \hs\E\Big[\int D_xe^{u f(\eta-\delta_x)}g(x)\,\eta(dx)\Big]
= \hs \E\Big[\int e^{u f(\eta-\delta_x)}\big(e^{u D_xf(\eta-\delta_x)}-1\big)g(x)\,\eta(dx)\Big]\\
&\le u\,\E\Big[\int e^{u f(\eta)} D_xf(\eta-\delta_x)g(x)\,\eta(dx)\Big].
\end{align*}
Assumption \eqref{e2.10} yields that 
$\Cov\big(F, e^{u F}\big)\le ua\hs\E\big[F\hs e^{u F}\big]+ub\hs\E\big[e^{uF}\big]$. 
By a rearrangement, we obtain
\begin{align*}
\frac{\E[F\hs e^{uF}]}{\E[e^{uF}]}\le\frac{\E[F]+ub}{1-au}.
\end{align*}
Setting $h(u):=\log\E[e^{uF}]$, we get $h'(u)\le(\E[F]+ub)/(1-au)$ and
consequently, by integration and $h(0)=0$, the bound
$\log\E[e^{sF}]\le-bs/a-(\E[F]/a+b/a^2)\log(1-as)$.  Using the
Chernoff bound \eqref{Chernoff}, we obtain (\ref{toopt}). The second
assertion (\ref{e2.11}) then follows by optimising with
$s=r/(ar+a\E[F]+b)$. This choice of $s$ is at most $a^{-1}$, since
$F+b/a\ge0$ a.s. by assumption (\ref{e2.10}), and the case
$\E[F]+b/a=0$ can be ruled out as this implies $DF\equiv0$ and
therefore a trivial concentration.
\end{proof}

\section{General Boolean models}\label{sgeneralBoolean}

In this section we consider a locally compact separable Hausdorff space $\Y$ together
with the Borel $\sigma$-field $\YY$. Let $\F\equiv\F(\Y)$ denote 
the class of closed subsets of $\Y$
equipped with the Fell topology generated by the
sets $\{F\in\F:F\cap C=\emptyset\}$ and $\{F\in\F:F\cap
G\neq\emptyset\}$ for arbitrary compact sets $C\subset\Y$ and open
sets $G\subset\Y$; see \cite{Mat,SW}.
The associated Borel $\sigma$-field is denoted by $\mathcal{B}(\F)$.

Let $\rho$ be a 
measure on $\Y$ and let $\F'\in\mathcal{B}(\F)$ be such that
$\rho$ is finite on $\F'$ and
$K\mapsto \rho(K)$ is measurable on $\F'$. We also assume that $\F'$ is closed
under (finite) unions  
and equip $\F'$ with the trace $\sigma$-field 
$\{B\cap \F':B\in\mathcal{B}(\F)\}$.

Let $\Lambda$ be a $\sigma$-finite intensity measure on $\F'$ satisfying 
\begin{align}\label{intas1} 
\int \rho(K)\,\Lambda(dK)<\infty.
\end{align}
Let $(B_n)_{n\in\N}$ be such that $B_n\uparrow\F'$ and
$\Lambda(B_n)<\infty$ for all $n\in\N$. Let $\mathbf{N}_l$ denote
the measurable set of all $\mu\in \mathbf{N}=\mathbf{N}(\F')$
satisfying $\mu(B_n)<\infty$ for each $n\in\N$. 
Note that $\P(\eta\in\mathbf{N}_l)=\Pi_\Lambda(\mathbf{N}_l)=1$.
Define
\begin{equation}\label{Zdef}
Z(\mu):=\bigcup_{K\in\hs\mu}K,\quad\mu\in\mathbf{N}_l,
\end{equation}
and $Z(\mu):=\emptyset$ for
$\mu\in\mathbf{N}\setminus\mathbf{N}_l$. 
The random set $Z:=Z(\eta)$ is called the {\it Boolean model}
governed by $\eta$. 
Now consider the function $f\colon\mathbf{N}\to[0,\infty)$ given
by
$$
f(\mu):=\I\{\rho(Z(\mu))<\infty\}\rho(Z(\mu)),
$$
with the convention $0\cdot\infty:=0$.
Our goal is to obtain a concentration inequality for 
\begin{equation*}
F:=f(\eta).
\end{equation*}
Campbell's formula (Proposition 2.7 in \cite{LastPenrose17}) and assumption \eqref{intas1} 
show that $\int\rho\,d\eta<\infty$ a.s., so that 
the sub-additivity of $\rho$ shows that
$\rho(Z)<\infty$ a.s. In particular $\P(F=\rho(Z))=1$.
However, we need to check that $\rho(Z)$ is a random variable.
This follows from 
the assumption on $\rho$ and the next lemma.

\begin{Lemma}\label{musum} 
The mapping $(x,\mu)\mapsto \I\{x\in Z(\mu)\}$ is measurable
on $\Y\times \mathbf{N}$.
Furthermore, for each $K\in\mathcal{Y}$ with $\rho(K)<\infty$, the mapping 
$\mu\mapsto \rho(Z(\mu)\cap K)$ is measurable on $\mathbf{N}$.
Finally $\mu\mapsto \rho(Z(\mu))$ is measurable on $\mathbf{N}$.
\end{Lemma}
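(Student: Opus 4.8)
The plan is to prove the three measurability statements one after another, each relying on the preceding one together with two standard facts: the joint measurability of the incidence map $(x,K)\mapsto\I\{x\in K\}$ on $\Y\times\F$, and the measurability of the integral of a jointly measurable nonnegative kernel against a point measure. The one genuinely delicate point is the third assertion, where $\rho$ is not assumed $\sigma$-finite, so that $\rho(Z(\mu))$ cannot simply be written as $\int\I\{x\in Z(\mu)\}\,\rho(dx)$ and dispatched by Fubini's theorem; there I localise on the \emph{particle} side via the exhausting sequence $(B_n)$.

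For the first statement I would start from the well-known fact that $\{(x,F)\in\Y\times\F:x\in F\}$ is closed in $\Y\times\F$ --- if $x_\alpha\to x$ and $F_\alpha\to F$ in the Fell topology with $x_\alpha\in F_\alpha$, then local compactness of $\Y$ together with the definition of the Fell topology forces $x\in F$; see e.g.\ \cite{SW} --- and hence Borel, so its intersection with $\Y\times\F'$ lies in the product of $\YY$ with the trace $\sigma$-field of $\F'$. In particular each set $\F'_x:=\{K\in\F':x\in K\}$ is measurable, and a routine monotone-class argument starting from product indicators shows that $(x,\mu)\mapsto\mu(\F'_x)=\int\I\{x\in K\}\,\mu(dK)$ is jointly measurable on $\Y\times\mathbf N$. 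Since $\mathbf N_l\in\mathcal N$, and since for $\mu\in\mathbf N_l$ a point $x$ lies in $Z(\mu)$ iff some particle of $\mu$ contains it, i.e.\ iff $\mu(\F'_x)\ge1$, while $Z(\mu)=\emptyset$ off $\mathbf N_l$, we get
\[
\I\{x\in Z(\mu)\}=\I\{\mu\in\mathbf N_l\}\,\I\{\mu(\F'_x)\ge1\},
\]
which is jointly measurable. As a by-product, for each fixed $\mu$ the set $Z(\mu)$ is a section of a jointly measurable set and therefore lies in $\YY$, so $\rho(Z(\mu))$ is well defined.

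The second statement then follows at once: for $K\in\YY$ with $\rho(K)<\infty$ the measure $\rho(\cdot\cap K)$ is finite, the integrand $(x,\mu)\mapsto\I_K(x)\I\{x\in Z(\mu)\}$ is jointly measurable by the first part, and hence $\mu\mapsto\int\I_K(x)\I\{x\in Z(\mu)\}\,\rho(dx)=\rho(Z(\mu)\cap K)$ is measurable by the standard measurability of parametrised integrals against a $\sigma$-finite measure.

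The third statement is the crux. On $\mathbf N\setminus\mathbf N_l$ we have $Z(\mu)=\emptyset$, hence $\rho(Z(\mu))=0$, so it suffices to treat the measurable set $\mathbf N_l$. For $\mu\in\mathbf N_l$ the restriction $\mu|_{B_m}$ again lies in $\mathbf N_l$, and because $B_m\uparrow\F'$ we have $Z(\mu|_{B_m})\uparrow Z(\mu)$, so continuity from below of $\rho$ gives
\[
\rho(Z(\mu))=\sup_{m\in\N}\rho\bigl(Z(\mu|_{B_m})\bigr),\qquad\mu\in\mathbf N_l.
\]
Thus it is enough to check that $\mu\mapsto\rho(Z(\mu|_{B_m}))$ is measurable on $\mathbf N_l$ for each fixed $m$. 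Partitioning $\mathbf N_l$ according to the finite value $k=\mu(B_m)\in\N_0$ (note $\mu\mapsto\mu|_{B_m}$ is measurable), the standard measurable enumeration of finite point measures (cf.\ \cite{LastPenrose17}) provides, where $\mu(B_m)=k$, measurable maps $K_1,\dots,K_k$ with values in $\F'\cap B_m$ such that $\mu|_{B_m}=\sum_{i=1}^k\delta_{K_i(\mu)}$. For $k\ge1$ this gives $Z(\mu|_{B_m})=K_1(\mu)\cup\dots\cup K_k(\mu)$, which lies in $\F'$ because $\F'$ is closed under finite unions; since moreover finite union is a measurable operation on $\F$ (it suffices to check this on the two families of Fell generators), $\mu\mapsto K_1(\mu)\cup\dots\cup K_k(\mu)$ is measurable into $\F'$, and composing with the hypothesis that $K\mapsto\rho(K)$ is measurable on $\F'$ shows $\mu\mapsto\rho(Z(\mu|_{B_m}))$ is measurable there; the case $k=0$ contributes only the constant $0$. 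Gluing over $k$ and taking the supremum over $m$ finishes the proof. The main obstacle is exactly the absence of $\sigma$-finiteness of $\rho$: this is why the argument is routed through the finitely many particles contained in $B_m$ and the hypothesis that $\F'$ is union-closed and carries a measurable $\rho$, rather than through a Fubini argument on $\Y$.
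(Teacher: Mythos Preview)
Your proof is correct and follows essentially the same route as the paper's, with only minor packaging differences. For the first assertion the paper writes $\I\{x\notin Z(\mu)\}=\prod_{n=1}^{\mu(\F')}\I\{x\notin\pi_n(\mu)\}$ via a global measurable enumeration $(\pi_n)$ of $\mu$, whereas you bypass enumeration there and use directly that $\I\{x\in Z(\mu)\}=\I\{\mu\in\mathbf N_l\}\I\{\mu(\F'_x)\ge1\}$; both rest on the measurability of the incidence map $(x,K)\mapsto\I\{x\in K\}$. For the third assertion the paper truncates the enumeration at the first $n$ particles and lets $n\to\infty$, while you restrict $\mu$ to $B_m$ and let $m\to\infty$; in either case the point is exactly the one you identify---that $\F'$ is union-closed, finite union is measurable on $\F$, and $\rho$ is measurable on $\F'$, so one can pass to finite unions and then take a monotone limit. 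Your explicit remark that $\sigma$-finiteness of $\rho$ is not assumed, so a Fubini argument on $\Y$ is unavailable and one must go through the particle side, is precisely the reason the paper proceeds as it does.
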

\begin{proof} By Theorem 1-2-1 in \cite{Mat}, $\F$ is a compact
and separable Hausdorff space and hence $\F'$ (equipped with the trace
$\sigma$-field) is a Borel space.
By \cite[Proposition 6.2]{LastPenrose17} (see also the proof of
\cite[Proposition 6.3]{LastPenrose17}), there exist
measurable functions $\pi_n\colon\mathbf{N}\to\F'$ such that
$$
\mu=\sum_{n=1}^{\mu(\F')}\delta_{\pi_n(\mu)},\quad \mu\in\mathbf{N}_l.
$$
This shows that $Z(\mu)=\cup^{\mu(\F')}_{n=1}\pi_n(\mu)$ for each $\mu\in\mathbf{N}_l$.
By \cite[Theorem 2-5-1]{Mat}, the mapping $(K,x)\mapsto \I\{x\in K\}$
is measurable on $\F\times\Y$. Since
\begin{align*}
\I\{x\notin Z(\mu)\}=\prod_{n=1}^{\mu(\F')}\I\{x\notin\pi_n(\mu)\},\quad \mu\in\mathbf{N}_l,
\end{align*}
this proves the first assertion. The second assertion follows from
Fubini's theorem.

By monotone convergence, the third assertion follows, once we have shown
that
\begin{align*}
\mu\mapsto \rho \bigg(\bigcup^{\min\{\mu(\F'),n\}}_{m=1}\pi_m(\mu)\bigg)
\end{align*}
is measurable for each $n\in\N$. By  \cite[Corollary 1-2-1]{Mat}
the mapping $(L,L')\mapsto L\cup L'$ is measurable
on $\F\times\F$ and hence also on $\F'\times\F'$. Since $\F'$ is closed
under unions, it follows that 
$\mu\mapsto \bigcup^{\min\{\mu(\F'),n\}}_{m=1}\pi_m(\mu)$ is a measurable mapping
from $\mathbf{N}_l$ to $\F'$. Since 
$L\mapsto \rho(L)$ is measurable
on $\F'$, the final assertion now follows.
\end{proof}

Let $t\in[0,1]$. We now compute the probability that a point $y\in\Y$
lies inside the Boolean model of intensity measure $t\Lambda$. We use
the notation 
\begin{align}\label{e3.4}
\F'_y:=\{K\in\F':y\in K\},\quad y\in\Y.
\end{align}
Since $(K,y)\mapsto\I\{y\in K\}$ is $\mathcal{B}(\F)\otimes\mathcal{Y}$-measurable, this is a measurable set.
The first definition in
\eqref{Zdef} and the defining properties of a Poisson process yield
\begin{align}\label{volfrac}
  \int\I\{y\in  Z(\mu)\}\,\Pi_{t\Lambda}(d\mu)&=1-\P(\eta_{t}(\F'_y)=0)
=1-\exp\big(-t\Lambda\big(\F'_y\big)\big).
\end{align}

\begin{Lemma}\label{l3.2} We have that
\begin{align*}
D_Kf(\mu)=\rho(K)-\rho(Z(\mu)\cap K),\quad \Pi_\Lambda\otimes\Lambda\text{-a.e.\ $(\mu,K)$}.
\end{align*}
\end{Lemma}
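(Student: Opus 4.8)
The plan is to compute the difference operator $D_Kf(\mu)=f(\mu+\delta_K)-f(\mu)$ directly from the definition $f(\mu)=\I\{\rho(Z(\mu))<\infty\}\rho(Z(\mu))$, and to show that for $\Pi_\Lambda\otimes\Lambda$-almost all $(\mu,K)$ the indicator factors are both equal to $1$, so that $D_Kf(\mu)=\rho(Z(\mu\cup K))-\rho(Z(\mu))$, where $Z(\mu+\delta_K)=Z(\mu)\cup K$ whenever $\mu\in\mathbf{N}_l$. Since $Z(\mu)\cup K=Z(\mu)\cup(K\setminus Z(\mu))$ is a disjoint decomposition, the (countable) additivity of the measure $\rho$ gives $\rho(Z(\mu)\cup K)=\rho(Z(\mu))+\rho(K\setminus Z(\mu))=\rho(Z(\mu))+\rho(K)-\rho(Z(\mu)\cap K)$, the last step using that $\rho(K)<\infty$ so the subtraction is legitimate. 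Subtracting $\rho(Z(\mu))$ (which is also finite on the relevant event) yields exactly $\rho(K)-\rho(Z(\mu)\cap K)$.

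The main point to be careful about is the ``almost everywhere'' qualifier and the finiteness of the quantities involved. First, $\Pi_\Lambda(\mathbf{N}_l)=1$, so we may restrict to $\mu\in\mathbf{N}_l$; and by \eqref{intas1} together with Campbell's formula, $\int\rho\,d\eta<\infty$ almost surely, hence by sub-additivity $\rho(Z(\mu))<\infty$ for $\Pi_\Lambda$-a.e.\ $\mu$, i.e.\ $\I\{\rho(Z(\mu))<\infty\}=1$ and $f(\mu)=\rho(Z(\mu))$ on a set of full measure. Second, for the perturbed configuration $\mu+\delta_K$ we need $\rho(Z(\mu)\cup K)<\infty$; this holds because $\rho(Z(\mu))<\infty$ and $\rho(K)<\infty$, the latter being guaranteed for $\Lambda$-a.e.\ $K$ by \eqref{intas1} (the set $\{K\in\F':\rho(K)=\infty\}$ is $\Lambda$-null). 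On this full-measure set both indicators in the definition of $f$ evaluate to $1$, so the computation above is valid.

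The only genuine obstacle is bookkeeping: one must assemble the four almost-sure statements ($\mu\in\mathbf{N}_l$; $\rho(Z(\mu))<\infty$; $\rho(K)<\infty$ for $\Lambda$-a.e.\ $K$; and consequently $\rho(Z(\mu)\cup K)<\infty$) into a single $\Pi_\Lambda\otimes\Lambda$-full-measure set on which the algebraic identity holds, invoking Fubini to combine the ``$\mu$-a.e.'' and ``$K$-a.e.'' pieces. The measurability of all maps appearing — in particular $\mu\mapsto\rho(Z(\mu)\cap K)$ and $\mu\mapsto\rho(Z(\mu))$ — is already supplied by Lemma \ref{musum}, and the relation $Z(\mu+\delta_K)=Z(\mu)\cup K$ for $\mu\in\mathbf{N}_l$ is immediate from the definition \eqref{Zdef}. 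Once the full-measure set is identified, the identity $D_Kf(\mu)=\rho(K)-\rho(Z(\mu)\cap K)$ follows from a one-line application of additivity of $\rho$ on the disjoint union $Z(\mu)\,\dot\cup\,(K\setminus Z(\mu))$.
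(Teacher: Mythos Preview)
Your proposal is correct and follows essentially the same route as the paper's proof: restrict to $\mu\in\mathbf{N}_l$ with $\rho(Z(\mu))<\infty$ (a $\Pi_\Lambda$-full-measure set), use $Z(\mu+\delta_K)=Z(\mu)\cup K$, and apply additivity of $\rho$. One minor simplification: you invoke \eqref{intas1} to conclude $\rho(K)<\infty$ for $\Lambda$-a.e.\ $K$, but in fact the paper's setup already stipulates that $\rho$ is finite on all of $\F'$, so this holds for every $K\in\F'$ and no a.e.\ argument is needed there.
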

\begin{proof}
For each $\mu\in\mathbf{N}_l$ and each $K\in\F'$ we have that
\begin{align*}
D_Kf(\mu)=\rho(Z(\mu+\delta_K))-\rho(Z(\mu))=\rho(Z(\mu)\cup K))-\rho(Z(\mu)).
\end{align*}
Since $\Pi_\Lambda(\{\mu\in\mathbf{N}_l:\rho(Z(\mu))<\infty\})=1$ 
we can use the additivity of $\rho$ to conclude the proof. 
\end{proof}

\begin{Lemma}\label{difflem}
Let $t\in[0,1]$ and $K\in\F'$. Then $\P$-a.s.
$$
\int D_Kf(\eta_t+\mu)\,\Pi_{(1-t)\Lambda}(d\mu)
\le\int\I\{y\in K\}\exp\big(-(1-t)\Lambda\big(\F'_y\big)\big)\,\rho(dy).
$$
\end{Lemma}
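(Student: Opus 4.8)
The plan is to compute the inner integral $\int D_Kf(\eta_t+\mu)\,\Pi_{(1-t)\Lambda}(d\mu)$ by first applying Lemma~\ref{l3.2}. For $\Pi_\Lambda\otimes\Lambda$-a.e.\ $(\mu,K)$ we have $D_Kf(\mu)=\rho(K)-\rho(Z(\mu)\cap K)$, and one checks (using that the law of $\eta_t+\mu$ under $\Pi_\Lambda\otimes\Pi_{(1-t)\Lambda}$ is absolutely continuous with respect to $\Pi_\Lambda$, or simply reapplying Lemma~\ref{l3.2} to the Poisson process $\eta_t+(\eta-\eta_t)$) that for fixed $K\in\F'$, $\P$-a.s.
\begin{align*}
\int D_Kf(\eta_t+\mu)\,\Pi_{(1-t)\Lambda}(d\mu)
=\int\big(\rho(K)-\rho(Z(\eta_t+\mu)\cap K)\big)\,\Pi_{(1-t)\Lambda}(d\mu).
\end{align*}

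Next I would rewrite $\rho(K)-\rho(Z(\eta_t+\mu)\cap K)=\rho(K\setminus Z(\eta_t+\mu))=\int_K\I\{y\notin Z(\eta_t+\mu)\}\,\rho(dy)$, valid because $\rho(K)<\infty$ and $K\in\F'$, using the set-theoretic identity $K\setminus Z = K \cap Z^c$ together with the first (measurability) assertion of Lemma~\ref{musum}. Then by Fubini's theorem (justified by nonnegativity of the integrand and $\sigma$-finiteness, the measurability being exactly the first part of Lemma~\ref{musum}) we may interchange the $\Pi_{(1-t)\Lambda}(d\mu)$-integration with the $\rho(dy)$-integration:
\begin{align*}
\int\rho(K\setminus Z(\eta_t+\mu))\,\Pi_{(1-t)\Lambda}(d\mu)
=\int_K\Big(\int\I\{y\notin Z(\eta_t+\mu)\}\,\Pi_{(1-t)\Lambda}(d\mu)\Big)\,\rho(dy).
\end{align*}

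The inner probability is then evaluated by conditioning on $\eta_t$: given $\eta_t$, the process $\eta_t+\mu$ with $\mu$ distributed as $\Pi_{(1-t)\Lambda}$ adds an independent Poisson process with intensity $(1-t)\Lambda$, so by the calculation leading to \eqref{volfrac} (applied to the $(1-t)$-intensity process and the event $\F'_y$), $\int\I\{y\notin Z(\eta_t+\mu)\}\,\Pi_{(1-t)\Lambda}(d\mu)=\I\{y\notin Z(\eta_t)\}\exp(-(1-t)\Lambda(\F'_y))\le\exp(-(1-t)\Lambda(\F'_y))$. Substituting this bound and reinstating the indicator $\I\{y\in K\}$ in place of the domain restriction gives the claim. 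The main obstacle is bookkeeping: one must make sure the $\P$-a.s.\ exceptional null set from Lemma~\ref{l3.2} (which depends on $K$) does not interfere — this is handled by fixing $K\in\F'$ first, as in the statement — and one must verify the measurability needed for Fubini, which is supplied by the first assertion of Lemma~\ref{musum}. No genuinely hard estimate is involved; the inequality is in fact an equality up to dropping the indicator $\I\{y\notin Z(\eta_t)\}$.
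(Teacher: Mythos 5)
Your proposal is correct and follows essentially the same route as the paper: Lemma \ref{l3.2} combined with the superposition theorem, the indicator representation $\rho(Z\cap K)=\int_K\I\{y\in Z\}\,\rho(dy)$ with Fubini (justified by Lemma \ref{musum}), and the void-probability formula \eqref{volfrac}. The only cosmetic difference is where the slack enters: the paper first bounds $D_Kf(\eta_t+\mu)\le\rho(K)-\rho(Z(\mu)\cap K)$ using $Z(\mu)\subseteq Z(\eta_t+\mu)$ and then integrates a deterministic quantity, whereas you keep the exact expression $\rho(K\setminus Z(\eta_t+\mu))$ through the Fubini step and drop the factor $\I\{y\notin Z(\eta_t)\}$ only at the end.
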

\begin{proof} It follows from Lemma \ref{l3.2} and the superposition
theorem for Poisson processes that $\P$-a.s.
\begin{align*}
D_Kf(\eta_t+\mu)\le \rho(K)-\rho(Z(\mu)\cap K),\quad \Pi_{(1-t)\Lambda}\otimes\Lambda\text{-a.e.\ $(\mu,K)$}.
\end{align*}
Furthermore,
\begin{align*}
\int\rho(Z(\mu)\cap K)\,\Pi_{(1-t)\Lambda}(d\mu)
&=\iint\I\{y\in Z(\mu)\}\I\{y\in K\}\,\Pi_{(1-t)\Lambda}(d\mu)\,\rho(dy)\\
&=\rho(K)-\int\I\{y\in K\}\exp\big(-(1-t)\Lambda\big(\F'_y\big)\big)\,\rho(dy),
\end{align*}
where we have used \eqref{volfrac}.
The assertion now follows.
\end{proof}
Define
\begin{align*}
\rho^*(K):=\int_K\tau\big(\Lambda(\F'_y)\big)\,\rho(dy),\quad K\in\mathcal{F}',
\end{align*}
where $\tau\colon[0,\infty]\to[0,1]$ is given by 
$\tau(z):=(1-e^{-z})/z$ for $z\in(0,\infty)$, $\tau(0):=\lim_{z\downarrow0}\tau(z)=1$
and $\tau(\infty):=\lim_{z\to\infty}\tau(z)=0$.
We define a measure $\nu^*$ on $[0,\infty)$ by
\begin{align}\label{levyanalog}
\nu^*:=\int_{\F'}\I\{\rho(K)\in\cdot\}\frac{\rho^*(K)}{\rho(K)}\,\Lambda(dK),
\end{align}
with the convention $0/0:=0$ and another measure $\nu$ 
on $[0,\infty)$ by
\begin{align*}
\nu:=\int_{\F'}\I\{\rho(K)>0,\rho(K)\in\cdot\}\,\Lambda(dK).
\end{align*}
Naturally, our concentration inequalities require the constant
\begin{equation}\label{intas2}
s_0:=\sup\Big\{s>0:\int_{\F'}\I\{\rho(K)> 1\}e^{s\rho(K)}\,\Lambda(dK)<\infty\Big\}
\end{equation}
to be positive.

The function $\phi\colon[0,\infty)\to\R$, defined by $\phi(z):= e^z-1-z$, 
plays an important role in the sequel.

\begin{Theorem}\label{booleth} Assume that \eqref{intas1} holds and that $s_0>0$,
where $s_0$ is given by \eqref{intas2}.
Then the Poisson functional $F=\rho(Z)$ satisfies
\begin{equation}\label{conc}
\P(F-\E[F]\ge r)\le\exp\Big(\inf_{0<s<s_0}\Big(\int \phi(s u)\,\nu^*(du)-sr\Big)\Big), \quad r>0.
\end{equation}
\end{Theorem}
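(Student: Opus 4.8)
The plan is to deduce \eqref{conc} from Corollary \ref{maincor} applied to $F=\rho(Z)$. The first step is a deterministic bound on $V_F$. By Lemma \ref{l3.2}, $D_KF=\rho(K)-\rho(Z\cap K)\in[0,\rho(K)]$, so $0\le e^{sD_KF}-1\le e^{s\rho(K)}-1$; integrating the bound of Lemma \ref{difflem} over $t\in[0,1]$ and using $\int_0^1 e^{-(1-t)z}\,dt=\tau(z)$ shows
\[
\int_0^1\!\!\int D_Kf(\eta_t+\mu)\,\Pi_{(1-t)\Lambda}(d\mu)\,dt\ \le\ \int_K\tau\big(\Lambda(\F'_y)\big)\,\rho(dy)\ =\ \rho^*(K).
\]
Since both factors in the integrand defining $V_F$ are nonnegative, multiplying and integrating against $\Lambda$ gives $V_F(s)\le\varv(s):=\int_{\F'}\big(e^{s\rho(K)}-1\big)\rho^*(K)\,\Lambda(dK)$ almost surely. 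By Tonelli's theorem and the identity $\int_0^s(e^{ua}-1)\,du=\phi(sa)/a$,
\[
\int_0^s\varv(u)\,du=\int_{\F'}\phi\big(s\rho(K)\big)\,\frac{\rho^*(K)}{\rho(K)}\,\Lambda(dK)=\int\phi(su)\,\nu^*(du),
\]
and this is finite for $s<s_0$: the bound $\phi(z)\le\tfrac12 z^2e^z$ together with \eqref{intas1} controls the set $\{\rho(K)\le1\}$, while \eqref{intas2} controls $\{\rho(K)>1\}$.

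If it were known that $s_F\ge s_0$, Corollary \ref{maincor} (applied with the above $\varv$, which is finite on $[0,s_0)$) would now give \eqref{conc}. The obstruction is that $s_F$ can be strictly smaller: $e^{sF}\in L^2(\P)$ requires $\E[e^{2sF}]<\infty$, hence $2s<s_0$, so in general only $s_F\ge s_0/2$. To obtain the bound on the whole interval $(0,s_0)$ I would truncate the intensity. With $(B_n)$ as in Section \ref{sgeneralBoolean}, set $B'_n:=B_n\cap\{K\in\F':\rho(K)\le n\}$, $\Lambda_n:=\Lambda(\,\cdot\cap B'_n)$, and let $\eta_n:=\eta(\,\cdot\cap B'_n)$, a Poisson process with intensity $\Lambda_n\le\Lambda$; put $F_n:=\rho(Z(\eta_n))$. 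Because $\rho\le n$ on $B'_n$ and $\int\rho(K)\,\Lambda(dK)<\infty$, the crude bound $F_n\le\sum_{K\in\eta_n}\rho(K)$ gives $\E[e^{2sF_n}]\le\exp\!\big(2se^{2sn}\!\int\rho(K)\,\Lambda(dK)\big)<\infty$ for all $s\ge0$, and a similar estimate gives $De^{sF_n}\in L^2(\P\otimes\Lambda_n)$, so $s_{F_n}=\infty$. Since $\Lambda_n$ still satisfies \eqref{intas1}, all of Section \ref{sgeneralBoolean} applies with $\Lambda$ replaced by $\Lambda_n$, which yields $V_{F_n}(s)\le\varv_n(s):=\int_{B'_n}\big(e^{s\rho(K)}-1\big)\rho^*_n(K)\,\Lambda_n(dK)$, where $\rho^*_n(K):=\int_K\tau(\Lambda_n(\F'_y))\,\rho(dy)$; moreover $\varv_n(s)\le n\,(e^{sn}-1)\,\Lambda(B'_n)<\infty$ for every $s\ge0$. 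Corollary \ref{maincor}, together with the same Tonelli computation, now gives
\[
\P\big(F_n-\E[F_n]\ge r\big)\le\exp\Big(\inf_{s>0}\Big(\int\phi(su)\,\nu^*_n(du)-sr\Big)\Big),\qquad
\nu^*_n:=\int_{B'_n}\I\{\rho(K)\in\cdot\}\,\frac{\rho^*_n(K)}{\rho(K)}\,\Lambda_n(dK).
\]

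Finally I would pass to the limit $n\to\infty$. As $Z(\eta_n)\uparrow Z$ we have $F_n\uparrow F$, hence $\E[F_n]\uparrow\E[F]<\infty$ by monotone convergence, so for $\varepsilon\in(0,r)$ one has $\I\{F-\E[F]\ge r\}\le\liminf_n\I\{F_n-\E[F_n]\ge r-\varepsilon\}$ $\P$-a.s., and Fatou's lemma yields $\P(F-\E[F]\ge r)\le\liminf_n\P(F_n-\E[F_n]\ge r-\varepsilon)$. For each fixed $s\in(0,s_0)$, $\int\phi(su)\,\nu^*_n(du)\to\int\phi(su)\,\nu^*(du)$: since $\Lambda_n(\F'_y)\uparrow\Lambda(\F'_y)$ and $\tau$ is continuous and nonincreasing, dominated convergence inside the $\rho(dy)$-integral gives $\rho^*_n(K)\to\rho^*(K)$, and the integrand $\I\{K\in B'_n\}\phi(s\rho(K))\rho^*_n(K)/\rho(K)$ is dominated by the $\Lambda$-integrable (for $s<s_0$) function $\phi(s\rho(K))$. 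Hence $\liminf_n\P(F_n-\E[F_n]\ge r-\varepsilon)\le\exp\big(\inf_{0<s<s_0}(\int\phi(su)\,\nu^*(du)-s(r-\varepsilon))\big)$, and letting $\varepsilon\downarrow0$ gives \eqref{conc}. The step demanding the most care is this limit: one must check that the two $L^2$-conditions defining $s_{F_n}$ really hold after truncation and that the dominating function $\phi(s\rho(K))$ works uniformly in $n$.
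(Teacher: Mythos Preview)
Your proposal is correct and follows essentially the same route as the paper: truncate the intensity so that $s_{F_n}=\infty$, bound $V_{F_n}$ via Lemmas \ref{l3.2} and \ref{difflem}, apply Corollary \ref{maincor}, compute $\int_0^s\varv_n(u)\,du$ in terms of $\phi$, and pass to the limit using Fatou together with dominated convergence (with dominator $\phi(s\rho(K))$). The only cosmetic differences are that the paper truncates by $\{\rho(K)\le n\}$ alone rather than also intersecting with $B_n$, and it bounds $\phi(su)$ on $\{u\le 1\}$ by $u\phi(s)$ instead of by $\tfrac12 s^2u^2e^{su}$.
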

\begin{proof} 
We write $\eta=\sum^{\eta(\F')}_{k=1}\delta_{Z_k}$. Let $n\in\N$.
Then
$\eta_n:=\sum^{\eta(\F')}_{k=1}\I\{\rho(Z_k)\le n\}\delta_{Z_k}$
is a Poisson process with intensity measure
$\Lambda_n(dK):=\I\{\rho(K)\le n\}\Lambda(dK)$.
Define $Z_n:=Z(\eta_n)$ and $F_n:=\rho(Z_n)$. 
We wish to apply Corollary \ref{maincor} to the pair $(\eta_n,F_n)$. 
We start by
checking the integrability properties of the Poisson functional
$F_n$.  
First, we obtain from Lemma \ref{l3.2} that
$$
\E\Big[\int (D_KF_n)^2\,\Lambda_n(dK)\Big]\le\int \I\{u\le n\}u^2\,\nu(du)
\le n\int u\,\nu(du)
$$
which is finite by \eqref{intas1}. Since $\E[F_n]<\infty$ the 
Poincar\'e inequality (see \cite[Exercise 18.2]{LastPenrose17})
shows that $\E[F^2]<\infty$.
Secondly, we have for each $s\ge 0$ that
\begin{align*}
\E[e^{2sF_n}]\le \E\Big[\exp\Big[2s\int \rho(K)\,\eta_n(dK)\Big]\Big]
=\exp\Big[\int\big(e^{2s\rho(K)}-1\big)\,\Lambda_n(dK)\Big],
\end{align*}
where we have used a well-known formula for Poisson processes (see e.g.\ \cite{LastPenrose17}). 
The integral in the exponent is dominated by a multiple (depending on $n$ and $s$) of
$\int \I\{u\le n\}u\,\nu(du)$ and hence finite.
Thirdly, we have
\begin{align*}
\E\Big[\int\big(D_Ke^{sF_n}\big)^2\,\Lambda_n(dK)\Big]
&=\int\E\big[\big(e^{sF_n}\big(e^{sD_KF_n}-1\big)\big)^2\big]\,\Lambda_n(dK)\\
&\le\E\big[e^{2sF_n}\big]\int\big(e^{s\rho(K)}-1\big)^2\,\Lambda_n(dK).
\end{align*}
This is finite, since
$\int\I\{u\le n\}\big(e^{su}-1\big)^2\,\nu(du)$ is bounded by a multiple
of $\int u\,\nu(du)$.

By Lemma \ref{l3.2} and Lemma \ref{difflem} (applied with $(\eta_n,F_n)$ in place of $(\eta,F)$), 
\begin{align*}
V_{F_n}(s)&\le\int_{\F'}\big(e^{s\rho(K)}-1\big)\int_\Y\I\{y\in K\}\int_0^1
\exp\big(-(1-t)\Lambda_n\big(\F'_y\big)\big)\,dt\,\rho(dy)\,\Lambda_n(dK)\\
&=\int_{\F'}\big(e^{s\rho(K)}-1\big)
\int_K\tau\big(\Lambda_n\big(\F'_y\big)\big)\,\rho(dy)\,\Lambda_n(dK)=:h_n(s).
\end{align*}
Let $\rho^*_n(K):=\int_K\tau\big(\Lambda_n\big(\F'_y\big)\big)\,\rho(dy)$.
Then we have
\begin{align*}
\int_0^sh_n(u)\,du &=\int_{\F'}\I\{\rho(K)>0\}\hs\rho^*_n(K)\int_0^s\big(e^{u\rho(K)}-1\big)\,du\,\Lambda_n(dK)\\
&=\int\I\{\rho(K)>0\}\hs\rho^*_n(K) \Big(\frac{e^{s\rho(K)}-1}{\rho(K)}-s\Big)\,\Lambda_n(dK)\\
&=\int\frac{\rho^*_n(K)}{\rho(K)} \phi(s\rho(K))\,\Lambda_n(dK).
\end{align*}
For each $r> 0$ we now obtain from \eqref{foreachs} that 
\begin{equation}\label{foreachs2}
\P(F_n-\E[F_n]\ge r)
\le\exp\Big(\int\I\{\rho(K)\le n\}\frac{\rho^*_n(K)}{\rho(K)} \phi(s\rho(K))\,\Lambda(dK)-sr\Big). 
\end{equation}

As $n\to\infty$ we have $Z_n\uparrow Z$ and hence $F_n\uparrow F$.
Monotone convergence implies $\E[F_n]\to\E[F]$. We now assume that $s\in(0,s_0)$
and assert that
\begin{align}\label{e3.13}
\lim_{n\to\infty}\int\I\{\rho(K)\le n\}\frac{\rho^*_n(K)}{\rho(K)} \phi(s\rho(K))\,\Lambda(dK)
=\int\frac{\rho^*(K)}{\rho(K)} \phi(s\rho(K))\,\Lambda(dK).
\end{align}
Indeed, we have for each $y\in\Y$ that $\lim_{n\to\infty}\Lambda_n(\F'_y)=\Lambda(\F'_y)$,
and since $\tau(\cdot)\le 1$ we obtain for each $K\in\F'$ from dominated convergence
that $\lim_{n\to\infty}\rho^*_n(K)=\rho^*(K)$.
Hence \eqref{e3.13} follows from dominated convergence once we have shown that
$\int\phi(s\rho(K))\,\Lambda(dK)$ is finite.
By assumption \eqref{intas2} it is sufficient to show that
\begin{align}\label{e3.14}
\int\I\{u\le 1\}\phi(su)\,\nu(du)<\infty.
\end{align}
For $u\in[0,1]$ the definition of $\phi$ implies that
$\phi(su)\le u\phi(s)$ and \eqref{e3.14} follows.
Let $\varepsilon>0$ such that $r-\varepsilon>0$. Fatou's Lemma shows that
\begin{align*}
\P(F-\E[F]>r-\varepsilon)&\le \liminf_{n\to\infty}\P(F_n-\E[F_n]>r-\varepsilon)\\
&\le\exp\Big(\int\frac{\rho^*(K)}{\rho(K)} \phi(s\rho(K))\,\Lambda(dK)-s(r-\varepsilon)\Big),
\end{align*}
where we have used \eqref{e3.13} and \eqref{foreachs2}
to obtain the second inequality. Letting $\varepsilon\to 0$, we obtain the asserted
concentration inequality \eqref{conc}.
\end{proof}

Theorem \ref{booleth} can be generalized to Lipschitz
functions of $F$. Recall that a function $T\colon [0,\infty)\to\R$
is Lipschitz with Lipschitz constant $c_T\ge 0$ if
$|T(u)-T(v)|\le c_T |u-v|$ for all $u,v\ge 0$.

\begin{Theorem}\label{boolethLip} Let the assumptions of
Theorem \ref{booleth} be satisfied and let 
$T\colon [0,\infty)\to\R$ be a Lipschitz function with Lipschitz constant $c_T>0$.
Then the Poisson functional $G:=T(\rho(Z))$ satisfies
\begin{align*}
\P(G-\E[G]\ge r)\le\exp\Big(\inf_{0<s<s_0/c_T}\Big(\int \phi(c_Ts u)\,\nu^*(du)-sr\Big)\Big), \quad r>0.
\end{align*}
\end{Theorem}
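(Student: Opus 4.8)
The plan is to repeat the proof of Theorem~\ref{booleth} almost verbatim, now for the Poisson functional $G=g(\eta)$ with $g:=T\circ f$ (measurable, since a Lipschitz $T$ is continuous) and the approximations $G_n:=T(F_n)=g(\eta_n)$, where $\eta_n$, $\Lambda_n$, $Z_n$ and $F_n=\rho(Z_n)$ are exactly as in that proof. As there, I would apply Corollary~\ref{maincor} in the form \eqref{foreachs} to each pair $(\eta_n,G_n)$ and then let $n\to\infty$.

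The one new ingredient is a Lipschitz bound for the difference operator: for $\mu\in\mathbf{N}_l$ and $K\in\F'$ one has $Z(\mu)\subseteq Z(\mu+\delta_K)=Z(\mu)\cup K$, so the monotonicity and additivity of $\rho$ (which is finite on $\F'$) give
\begin{align*}
|D_Kg(\mu)|&=\big|T(\rho(Z(\mu)\cup K))-T(\rho(Z(\mu)))\big|\\
&\le c_T\big(\rho(K)-\rho(Z(\mu)\cap K)\big)=c_T\hs D_Kf(\mu)\le c_T\rho(K).
\end{align*}
From this, together with $|G_n|\le|T(0)|+c_TF_n$, the integrability estimates recorded in the proof of Theorem~\ref{booleth} for $F_n$ carry over to $G_n$ and yield $G_n\in L^2(\P)$, $DG_n\in L^2(\P\otimes\Lambda_n)$, $e^{sG_n}\in L^2(\P)$ and $De^{sG_n}\in L^2(\P\otimes\Lambda_n)$ for every $s\ge0$, so that $s_{G_n}=\infty$.

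Next I would estimate $V_{G_n}$. From $|e^z-1|\le e^{|z|}-1$ and the Lipschitz bound, $|e^{sD_KG_n}-1|\le e^{sc_T\rho(K)}-1$; and, bounding the inner integral of $V_{G_n}(s)$ by absolute values and invoking Lemma~\ref{l3.2} and Lemma~\ref{difflem} (with $(\eta_n,F_n)$ in place of $(\eta,F)$),
\[
\Big|\int_0^1\!\!\int D_Kg(\eta_{n,t}+\mu)\,\Pi_{(1-t)\Lambda_n}(d\mu)\,dt\Big|\le c_T\int_K\tau\big(\Lambda_n(\F'_y)\big)\,\rho(dy)=c_T\rho^*_n(K),
\]
with $\rho^*_n$ as in that proof and $\eta_{n,t}$ the $t$-thinning of $\eta_n$. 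Multiplying these two bounds gives, a.s., $V_{G_n}(s)\le c_T\hs h_n(c_Ts)$ with $h_n$ exactly as there, so $\varv_n(s):=c_T\hs h_n(c_Ts)$ is an admissible deterministic bound. The substitution $v=c_Tu$ in the integral computation already carried out there gives $\int_0^s\varv_n(u)\,du=\int_0^{c_Ts}h_n(v)\,dv=\int\I\{\rho(K)\le n\}\tfrac{\rho^*_n(K)}{\rho(K)}\phi(c_Ts\rho(K))\,\Lambda(dK)$, so \eqref{foreachs} would yield, for all $s>0$ and $r>0$,
\[
\P(G_n-\E[G_n]\ge r)\le\exp\Big(\int\I\{\rho(K)\le n\}\frac{\rho^*_n(K)}{\rho(K)}\phi(c_Ts\rho(K))\,\Lambda(dK)-sr\Big).
\]

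Finally I would let $n\to\infty$ exactly as in Theorem~\ref{booleth}: $F_n\uparrow F$ forces $G_n\to G$ a.s.\ by continuity of $T$, while $|G_n|\le|T(0)|+c_TF$ with $F$ integrable gives $\E[G_n]\to\E[G]$ by dominated convergence; and for $s\in(0,s_0/c_T)$ one has $c_Ts<s_0$, so $\rho^*_n(K)\to\rho^*(K)$ (as $\tau\le1$), $\I\{\rho(K)\le n\}\to1$, and $\int\phi(c_Tsu)\,\nu(du)<\infty$ — because $\int\I\{u>1\}e^{c_Tsu}\,\nu(du)<\infty$ by the definition of $s_0$ and $\phi(c_Tsu)\le u\,\phi(c_Ts)$ for $u\in[0,1]$ — whence dominated convergence identifies the limit of the exponent as $\int\phi(c_Tsu)\,\nu^*(du)-sr$; then Fatou's lemma applied to $\{G-\E[G]>r-\varepsilon\}$ (using $G_n-\E[G_n]\to G-\E[G]$ a.s.), followed by $\varepsilon\downarrow0$ and $\inf_{0<s<s_0/c_T}$, gives the claim. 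The step I expect to require the most care — indeed the only one not a literal copy of the proof of Theorem~\ref{booleth} — is the sign of $D_Kg$: since $T$ need not be monotone, the integrand defining $V_{G_n}$ can change sign, so the factor-by-factor bound available there for $V_{F_n}$ is not directly applicable; routing through $|e^z-1|\le e^{|z|}-1$ and $|D_Kg|\le c_T D_Kf$ is what restores the clean deterministic bound $c_T\hs h_n(c_Ts)$, after which one only needs to check that the rescaling $s\mapsto c_Ts$ both produces $\phi(c_Tsu)$ in the exponent and is consistent with the range $s<s_0/c_T$ required for the dominated-convergence step.
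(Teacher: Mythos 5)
Your proposal is correct and follows essentially the same route as the paper's own proof: approximate by $G_n=T(F_n)$, use the Lipschitz bound $|D_KG_n|\le c_T D_KF_n\le c_T\rho(K)$ together with $|e^u-1|\le e^{|u|}-1$ to get the deterministic bound $V_{G_n}(s)\le\int(e^{sc_T\rho(K)}-1)\hs c_T\rho^*_n(K)\,\Lambda_n(dK)$, integrate to produce $\phi(c_Ts\rho(K))$, and pass to the limit as in Theorem \ref{booleth}. The only difference is that you spell out the limiting step ($G_n\to G$, $\E[G_n]\to\E[G]$, dominated convergence on the range $s<s_0/c_T$) which the paper compresses into ``we can finish the proof as before''; your version of that step is correct.
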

\begin{proof} We generalize the proof of Theorem \ref{booleth}. 
Let $G_n:=T(F_n)$, $n\in\N$. We first note that
\begin{align}\label{e2.52}
|D_KG_n|=|T(f(\eta_n+\delta_K)-T(f(\eta_n))|\le c_T D_KF_n,\quad K\in\F'.
\end{align}
Since $|G_n|\le |T(0)|+c_T F_n$, we can use the first part of the above 
proof to conclude that the pair $(G_n,\eta_n)$ satisfies the assumptions of
Corollary \ref{maincor} with $s_0/c_T$ in place of $s_0$.
Using \eqref{e2.52} and the inequality $|e^u-1|\le e^{|u|}-1$, $u\in\R$,
we now obtain for all $s\ge 0$ that
\begin{align*}
V_{G_n}(s)\le|V_{G_n}(s)|\le \int_{\F'}\big(e^{sc_T\rho(K)}-1\big)
\int_K c_T\tau\big(\Lambda_n\big(\F'_y\big)\big)\,\rho(dy)\,\Lambda_n(dK):=h_n(s).
\end{align*}
Since
\begin{align*}
\int_0^sh_n(u)\,du=\int\frac{\rho^*_n(K)}{\rho(K)} \phi(sc_T\rho(K))\,\Lambda_n(dK),
\end{align*}
we can finish the proof as before.
\end{proof}

In the remainder of the paper we shall work with 
Theorem \ref{booleth} and not with its more general version.
However, all results can be formulated for Lipschitz functions of 
$\rho(Z)$.

Define a function $h\colon[0,\infty)\to[0,\infty]$ by 
\begin{align}\label{e2.83}
h(s):=\int_0^\infty u(e^{su}-1)\,\nu^*(du),\quad s\ge 0.
\end{align}
If $\nu^*=0$, then $h\equiv 0$. Otherwise
$h$ is finite and strictly increasing on $[0,s_0)$. Let
$h^{-1}\colon[0,\infty)\to[0,\infty]$ denote the generalized inverse of $h$,
defined by 
\begin{align*}
h^{-1}(u):=\inf\{s\ge 0: h(s)\ge u\},\quad u\ge 0,
\end{align*}
where $\inf\emptyset:=\infty$. If $\nu^*=0$, then $h^{-1}\equiv\infty$ on
$(0,\infty)$. Otherwise $h^{-1}$ is strictly increasing and continuous
on $[0,h(s_0-))$, where $h(s_0-):=\lim_{s\uparrow s_0}h(s)$.

\begin{Theorem}\label{booleth2} Under the assumptions of Theorem \ref{booleth},
\begin{align*}
\P(F-\E[F]\ge r)\le\exp\Big(-\int^r_0 h^{-1}(u)\,du\Big), \quad r\in (0,h(s_0-)).
\end{align*}
\end{Theorem}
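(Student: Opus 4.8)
The plan is to start from the bound in Theorem \ref{booleth}, namely
$$
\P(F-\E[F]\ge r)\le\exp\Big(\inf_{0<s<s_0}\Big(\int\phi(su)\,\nu^*(du)-sr\Big)\Big),
$$
and to evaluate the infimum over $s$ explicitly. Write $\psi(s):=\int\phi(su)\,\nu^*(du)$ for $s\in[0,s_0)$. The function $\phi(z)=e^z-1-z$ is convex and smooth with $\phi(0)=0$, $\phi'(z)=e^z-1$, so $\psi$ is convex, $\psi(0)=0$, and differentiating under the integral sign (justified since $\nu^*$ has finite mass and $\int\I\{u\le1\}\phi(su)\,\nu^*(du)<\infty$ together with $s<s_0$ controls the tail, as in the proof of Theorem \ref{booleth}) gives
$$
\psi'(s)=\int u(e^{su}-1)\,\nu^*(du)=h(s),
$$
with $h$ as defined in \eqref{e2.83}. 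Thus we must minimise $g(s):=\psi(s)-sr$ over $s\in(0,s_0)$; since $g'(s)=h(s)-r$ and $h$ is nonnegative, continuous and strictly increasing on $[0,s_0)$ (assuming $\nu^*\neq0$; the case $\nu^*=0$ forces $F=\E[F]$ a.s.\ and the statement is trivial), the minimiser is the unique $s^*$ with $h(s^*)=r$, i.e.\ $s^*=h^{-1}(r)$, and this lies in $(0,s_0)$ precisely when $r\in(0,h(s_0-))$, which is the stated range.

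It remains to identify the minimal value $g(s^*)=\psi(s^*)-s^*r$ with $-\int_0^r h^{-1}(u)\,du$. The key step is the integration-by-parts / Fenchel-type identity: since $\psi(0)=0$ and $\psi'=h$,
$$
\psi(s^*)=\int_0^{s^*}h(s)\,ds,
$$
and therefore
$$
\psi(s^*)-s^*r=\int_0^{s^*}h(s)\,ds-s^*h(s^*)=-\int_0^{s^*}s\,dh(s)=-\int_0^{h(s^*)}h^{-1}(u)\,du=-\int_0^r h^{-1}(u)\,du,
$$
where the third equality is integration by parts for the Stieltjes integral (valid because $h$ is continuous and increasing on $[0,s_0)$), and the fourth is the change of variables $u=h(s)$, using that $h^{-1}$ is the genuine continuous inverse of $h$ on $[0,h(s_0-))$ as recorded before the theorem. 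Substituting back into the exponential bound yields the claim.

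I expect the only real subtlety to be bookkeeping at the boundary: checking that $h(s)\to h(s_0-)$ may be finite or infinite and that in either case $h^{-1}(r)<s_0$ for $r<h(s_0-)$, plus the standard justification for differentiating $\psi$ under the integral and for the Stieltjes integration by parts. None of these is a genuine obstacle — the monotonicity and continuity statements for $h$ and $h^{-1}$ have already been asserted in the paragraph preceding Theorem \ref{booleth2} — so the proof is essentially a one-variable calculus computation carried out on top of Theorem \ref{booleth}.
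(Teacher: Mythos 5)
Your proposal is correct and follows essentially the same route as the paper: the paper's own proof reduces to Theorem \ref{booleth}, verifies that $\nu^*((0,\infty))>0$ (so that $h$ is finite, positive and strictly increasing with positive derivative on $(0,s_0)$), and then refers to the proof of Theorem 1 in Houdr\'e's paper, which is precisely the Legendre-transform computation $\inf_s\big(\int\phi(su)\,\nu^*(du)-sr\big)=-\int_0^r h^{-1}(u)\,du$ that you carry out explicitly via $s^*=h^{-1}(r)$ and Stieltjes integration by parts. The only cosmetic differences are that the paper justifies the nondegeneracy of $\nu^*$ from $\int\rho(K)\,\Lambda(dK)>0$ by showing $\Lambda(\F'_y)<\infty$ for $\rho$-a.e.\ $y\in K$, whereas you dispose of the degenerate case by noting it is trivial (indeed, if $\nu^*=0$ then $h(s_0-)=0$ and the stated range of $r$ is empty), and that your symbol $\psi$ clashes with the function defined in \eqref{epsi}.
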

\begin{proof} If $\int \rho(K)\,\Lambda(dK)=0$, then
$F\equiv 0$ and the result is trivial.
Hence we can assume that $\int \rho(K)\,\Lambda(dK)>0$.
We next show that then $\nu^*(0,\infty)>0$. By definition of $\nu^*$ it is sufficient
to show for each $K\in\F'$  that $\rho^*(K)>0$ whenever $\rho(K)>0$.
Since $\tau>0$ on $[0,\infty)$, it is sufficient to 
prove that $\Lambda(\F'_y)<\infty$ for $\rho$-a.e.\ $y\in K$. But this follows from
\begin{align*}
\int_K \Lambda(\F'_y)\,\rho(dy)=\iint \I\{y\in L\}\I\{y\in K\}\,\Lambda(dL)\,\rho(dy)
=\int \rho(K\cap L)\,\Lambda(dL),
\end{align*}
which is finite by \eqref{intas1}.

Since $\nu^*(0,\infty)>0$ we obtain for each $s\in(0,s_0)$ that
\begin{align*}
\frac{d}{ds}\int\phi(su)\,\nu^*(du)=h(s)\in(0,\infty),\qquad
\frac{d}{ds}h(s)=\int u^2e^{su}\,\nu^*(du)\in(0,\infty).
\end{align*}
In view of Theorem \ref{booleth} the proof can now be finished as that of \cite[Theorem 1]{Hou}.
\end{proof}

\begin{Rem}\label{Rbetter}\rm Proposition 3.2 in \cite{Wu}
implies \eqref{conc} with $\nu$ instead of $\nu^*$.
Since $\nu^*\le\nu$, our result improves this inequality. 
The larger $y\mapsto \Lambda(\F'_y)$ the larger
the improvement. Recall from \eqref{volfrac} that
$\P(y\in Z)=1-\exp\big(-\Lambda\big(\F'_y\big)\big)$
is the probability that the point $y\in\Y$ is covered by $Z$.
Our concentration inequality takes into account these covering probabilities
and hence the overlapping of distinct grains.
\end{Rem}


In the sequel we use the function $\psi\colon [0,\infty)\to (-\infty,\infty]$, defined by
\begin{align}\label{epsi}
\psi(z):= 1-\frac1z(1+z)\log(1+z),\quad z>0,
\end{align}
and $\psi(0):=\infty$. We also define 
\begin{align*}
m_i:=\int_0^\infty u^i\,\nu^*(du),\quad i\in\{0,1,2\}.
\end{align*}
The proof of the following corollary of Theorem \ref{booleth2} 
is similiar to that of \cite[Corollary 1]{Hou}.

\begin{Cor}\label{corbounded} Assume that \eqref{intas1} holds and that
$\nu^*\ne 0$. Assume also that
there is some $a>0$ such that $\rho(K)\le a$ for $\Lambda$-a.e.\ $K\in\F'$.
Then we have for each $i\in\{0,1,2\}$ that
\begin{align*}
\P(F-\E[F]\ge r)
\le \exp\bigg[\frac{r}{a}\,\psi\Big(\frac{a^{i-1}r}{m_i}\Big)\bigg],
\quad r>0.
\end{align*}
\end{Cor}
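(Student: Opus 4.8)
The plan is to apply Theorem~\ref{booleth2} after bounding the function $h$ of \eqref{e2.83} from above by an explicitly invertible function. First I would record the consequences of the boundedness hypothesis. Since $\rho(K)\le a$ for $\Lambda$-a.e.\ $K$, the measure $\nu^*$ is concentrated on $[0,a]$, and from $\rho^*(K)\le\rho(K)$ (as $\tau\le1$), \eqref{intas1} and $\rho(K)\le a$ one reads off $m_1=\int\rho^*(K)\,\Lambda(dK)<\infty$ and $m_2\le a\hs m_1<\infty$; these are strictly positive because $\nu^*\neq0$ has no atom at $0$ (if $\rho(K)=0$ then $\rho^*(K)=0$, so the convention $0/0:=0$ removes any contribution at the point $0$), whence $\nu^*(0,\infty)>0$. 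The number $m_0$ lies in $(0,\infty]$; if $m_0=\infty$ the asserted bound for $i=0$ is trivial since $\psi(0)=\infty$, so I may assume $m_i<\infty$ for the index at hand. Finally $\rho(K)\le a$ together with $\Lambda(\{\rho(K)>1\})\le\int\rho\,d\Lambda<\infty$ gives $s_0=\infty$, and $\nu^*(0,\infty)>0$ forces $h(s)\to\infty$ as $s\to\infty$, i.e.\ $h(s_0-)=\infty$, so Theorem~\ref{booleth2} is available for every $r>0$.

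The core of the argument is the elementary pointwise bound, valid for all $s\ge0$, all $i\in\{0,1,2\}$ and all $u\in[0,a]$,
\begin{align*}
u\big(e^{su}-1\big)\le a^{1-i}\hs u^i\big(e^{sa}-1\big).
\end{align*}
For $i=1$ this is just monotonicity of $u\mapsto e^{su}$; for $i=0$ one multiplies that inequality by $u\le a$; and for $i=2$ it reads $\frac1u(e^{su}-1)\le\frac1a(e^{sa}-1)$, which holds because $u\mapsto(e^{su}-1)/u=s\hs g(su)$ is nondecreasing on $(0,a]$, $g(z):=(e^z-1)/z$ being nondecreasing. Integrating against $\nu^*$, which sits on $[0,a]$, and using $m_i=\int u^i\,\nu^*(du)$ yields
\begin{align*}
h(s)\le H_i(s):=a^{1-i}\hs m_i\big(e^{sa}-1\big),\qquad s\ge0.
\end{align*}

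Since $m_i\in(0,\infty)$, the map $H_i$ is a strictly increasing bijection of $[0,\infty)$ onto itself with inverse $H_i^{-1}(u)=\frac1a\log\big(1+\frac{a^{i-1}u}{m_i}\big)$. From $h\le H_i$ one gets $\{s\ge0:h(s)\ge u\}\subseteq\{s\ge0:H_i(s)\ge u\}$ and hence $h^{-1}(u)\ge H_i^{-1}(u)$ for every $u\ge0$. Writing $c:=a^{i-1}/m_i$ and using $\int_0^r\log(1+cu)\,du=\frac1c\big((1+cr)\log(1+cr)-cr\big)$, a direct computation gives
\begin{align*}
\int_0^r h^{-1}(u)\,du\ge\frac1a\int_0^r\log\Big(1+\frac{a^{i-1}u}{m_i}\Big)\,du
=\frac1{ac}\Big((1+cr)\log(1+cr)-cr\Big)=-\frac ra\hs\psi\Big(\frac{a^{i-1}r}{m_i}\Big),
\end{align*}
the last equality being the identity $(1+z)\log(1+z)-z=-z\hs\psi(z)$ with $z=cr=a^{i-1}r/m_i$. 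Substituting this into the bound of Theorem~\ref{booleth2} yields the claimed inequality for each $i\in\{0,1,2\}$.

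The manipulations above are routine; the only delicate points are the direction of the order relation for generalized inverses (from $h\le H_i$ one obtains $h^{-1}\ge H_i^{-1}$, hence a \emph{lower} bound on $\int_0^r h^{-1}$ and thus an \emph{upper} bound on the tail probability) and the bookkeeping that $m_1,m_2\in(0,\infty)$ while $m_0$ may be infinite, the latter case being absorbed trivially into the convention $\psi(0)=\infty$.
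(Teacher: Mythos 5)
Your proof is correct and follows essentially the same route as the paper: establish $s_0=\infty$ and $h(s_0-)=\infty$ so that Theorem~\ref{booleth2} applies for all $r>0$, bound $h(s)\le m_i\hs a^{1-i}(e^{sa}-1)$ (the paper states this without the elementary verification you supply), invert to get $h^{-1}(u)\ge\frac1a\log(1+a^{i-1}u/m_i)$, and integrate using the identity $-\int_0^r\log(1+zu)\,du=r\hs\psi(zr)$. Your extra bookkeeping on the positivity and possible infiniteness of the $m_i$ is a harmless refinement of the paper's brief remark for the case $i=0$.
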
 
\begin{proof} We first note that $h(s_0-)=\infty$. This follows by $\nu^*\neq0$ once we have shown that $s_0=\infty$. To this end, let $s>0$. Then, we have $\int\I\{\rho(K)>1\}e^{s\rho(K)}\,\Lambda(dK)\le e^{sa}\int \rho(K)\,\Lambda(dK)$ which is finite by (\ref{intas1}).

Let $i\in\{0,1,2\}$. In the case $i=0$ we can assume
that $m_0=\nu^*([0,\infty))>0$. (Otherwise there is nothing to prove.)
It is easy to see that
$h(s)\le m_i\hs a^{1-i}(e^{sa}-1)$ for all $s>0$;
cf.\ the proof of \cite[Corollary 1]{Hou} for the case $i=2$.
Therefore
\begin{align*}
h^{-1}(r)\ge \frac{1}{a}\log\bigg(1+\frac{ra^{i-1}}{m_i}\bigg),\quad r>0.
\end{align*}
Since
\begin{align*}
-\int^r_0\log(1+zu)\,du=r\,\psi(zr),\quad r>0,
\end{align*}
for each $z>0$, we deduce the assertion from Theorem \ref{booleth2}.
\end{proof}

%


\begin{Ex}\label{eHausdorff}\rm
In this example we specialize the setting of this 
section to the case $\Y=\R^d$ for some fixed integer $d\in\N$. 
We set $\F:=\F(\R^d)$. We fix
$r\in [0,d]$ and let $\lambda_r$ denote the $r$-dimensional
Hausdorff measure on $\R^d$. Let $\mathcal{F}^r$ denote the
set of all $C\in\F$ such that $\lambda_r(C\cap\cdot)$  is a locally
finite measure on $\R^d$. By \cite[Corollary 2.1.5]{Zaehle82},
this is a measurable set, that is $\mathcal{F}^r\in\mathcal{B}(\F)$.
Let $W\subset\R^d$ be a compact set and define 
$\rho:=\lambda_r(W\cap\cdot)$.
By \cite[Theorem 2.1.3]{Zaehle82}, we have that
$K\mapsto \rho(K)$ is measurable on $\F^r$,
so that the pair $(\mathcal{F}^r,\rho)$ satisfies the general assumptions
of this section (with $\mathcal{F}'=\mathcal{F}^r$).
\end{Ex}

\section{Stationary Boolean models}\label{secBooleand}

In this section we consider the setting of Example \ref{eHausdorff}
in the case $r=d$.
We let $\eta$ be a Poisson process on the space $\F^d$ of all
closed sets $K\subset \R^d$ with $\lambda_d(K)<\infty$.
We assume here that the intensity measure $\Lambda$ of
$\eta$ is of the translation invariant form
\begin{align}\label{lambdarep}
\Lambda=\int_{\F^d}\int_{\R^d}\I\{K+x\in\cdot\}\,dx\,\Q(dK),
\end{align}
where $K+x:=\{y+x:y\in K\}$  and $\Q$ is a $\sigma$-finite measure on $\F^d$
satisfying
\begin{align}\label{e4.2}
0<\gamma_1:=\int\lambda_d(K)\,\Q(dK)<\infty.
\end{align}

\begin{Ex}\rm
Let $\Q_0$ be a probability measure on $\F^d$ satisfying
$\int\lambda_d(K)\,\Q_0(dK)<\infty$ and let $\rho_0$
be a measure on $(0,\infty)$ such that $\int _0^\infty r^{d}\,\rho_0(dr)<\infty$. 
Assume that
\begin{align*}
\Q=\iint \I\{rK\in\cdot\}\,\rho_0(dr)\,\Q_0(dK).
\end{align*}
Then
\begin{align*}
\int \lambda_d(K)\,\Q(dK)= \iint\lambda_d(rK)\,\rho_0(dr)\,\Q_0(dK)
=\int r^d \rho_0(dr)\int\lambda_d(K)\,\Q_0(dK)<\infty.
\end{align*}
\end{Ex}

We fix a closed set $W\subset\R^d$ with positive finite volume and 
derive concentration inequalities for the Poisson functional
$$
F=\lambda_d(Z\cap W),
$$
where $Z(\mu)$, $\mu\in\mathbf{N}$, is given by \eqref{Zdef} and the $\sigma$-finiteness of
$\Lambda$ will be checked below. We do this by applying
the results of the previous section in the case $\rho:=\lambda_d(W\cap \cdot)$.
Let
$$
p:=1-e^{-\gamma_1}.
$$
By \eqref{volfrac}, we have $p=\P(0\in Z)$. Moreover, Fubini's theorem and
\eqref{lambdastat} below imply that
\begin{align*}
\E[F]=p\lambda_d(W),
\end{align*}
so that $p$ is the {\em volume fraction} of $Z$.

\begin{Theorem}\label{clasboole} Assume that \eqref{e4.2} holds.
Then the Poisson functional $F=\lambda_d(Z\cap W)$ satisfies
$$
\P(F-\E[F]\ge r)
\le\exp\Big(\inf_{s>0}
\Big(\frac{p}{\gamma_1}\iint\phi(s\lambda_d((K+x)\cap W))\,dx\,\Q(dK)-sr\Big)\Big),\quad r>0.
$$
\end{Theorem}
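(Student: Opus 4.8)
The plan is to derive Theorem~\ref{clasboole} as a direct specialization of Theorem~\ref{booleth} to the stationary setting of this section, by identifying the measure $\nu^*$ explicitly in terms of $\Q$ and $W$. First I would verify that the general hypotheses of Section~\ref{sgeneralBoolean} are met: the pair $(\F^d,\rho)$ with $\rho=\lambda_d(W\cap\cdot)$ fits Example~\ref{eHausdorff} with $r=d$, the intensity measure $\Lambda$ given by \eqref{lambdarep} is $\sigma$-finite (this needs a short argument, partitioning $\R^d$ into unit cubes and using \eqref{e4.2}), and the integrability condition \eqref{intas1} holds because $\int\rho(K)\,\Lambda(dK)=\iint\lambda_d((K+x)\cap W)\,dx\,\Q(dK)\le\lambda_d(W)\gamma_1<\infty$ by the translation invariance identity (what the paper calls \eqref{lambdastat}). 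I would also check that $s_0>0$: here $\rho(K)\le\lambda_d(W)<\infty$ is uniformly bounded, so $\int\I\{\rho(K)>1\}e^{s\rho(K)}\,\Lambda(dK)\le e^{s\lambda_d(W)}\Lambda(\{\rho>1\})$, and $\Lambda(\{\rho(K)>1\})<\infty$ follows from \eqref{intas1} by Markov's inequality; hence $s_0=\infty$.

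Next I would compute $\Lambda(\F'_y)$ for $y\in\R^d$. By \eqref{lambdarep}, $\Lambda(\F'_y)=\iint\I\{y\in K+x\}\,dx\,\Q(dK)=\int\lambda_d(K)\,\Q(dK)=\gamma_1$, a constant independent of $y$. Therefore $\tau(\Lambda(\F'_y))=\tau(\gamma_1)=(1-e^{-\gamma_1})/\gamma_1=p/\gamma_1$, and consequently $\rho^*(K)=\int_K\tau(\Lambda(\F'_y))\,\rho(dy)=(p/\gamma_1)\,\rho(K)$ for every $K\in\F'$. Plugging this into the definition \eqref{levyanalog} of $\nu^*$ gives $\nu^*=(p/\gamma_1)\int_{\F^d}\I\{\rho(K)>0,\,\rho(K)\in\cdot\}\,\Lambda(dK)=(p/\gamma_1)\int_{\F^d}\I\{\rho(K)\in\cdot\}\,\Lambda(dK)$, where the last equality holds because a Dirac mass at $0$ contributes nothing inside $\phi$ (note $\phi(0)=0$). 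Hence, for any measurable test function,
\begin{align*}
\int_0^\infty\phi(su)\,\nu^*(du)=\frac{p}{\gamma_1}\int_{\F^d}\phi(s\rho(K))\,\Lambda(dK)
=\frac{p}{\gamma_1}\iint\phi\big(s\lambda_d((K+x)\cap W)\big)\,dx\,\Q(dK),
\end{align*}
again using \eqref{lambdarep} and $\rho(K+x)=\lambda_d((K+x)\cap W)$.

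With this identification, Theorem~\ref{booleth} applied to $F=\rho(Z)=\lambda_d(Z\cap W)$ yields exactly
\begin{align*}
\P(F-\E[F]\ge r)\le\exp\Big(\inf_{0<s<s_0}\Big(\int\phi(su)\,\nu^*(du)-sr\Big)\Big)
=\exp\Big(\inf_{s>0}\Big(\frac{p}{\gamma_1}\iint\phi(s\lambda_d((K+x)\cap W))\,dx\,\Q(dK)-sr\Big)\Big),
\end{align*}
since $s_0=\infty$; this is the claimed bound. The only genuinely new work beyond invoking Theorem~\ref{booleth} is the pair of routine computations $\Lambda(\F'_y)\equiv\gamma_1$ and the unfolding of $\nu^*$ via \eqref{lambdarep}; I expect the main (minor) obstacle to be bookkeeping the measurability and $\sigma$-finiteness of $\Lambda$ carefully, together with the translation-covariance identity for $\iint\lambda_d((K+x)\cap W)\,dx\,\Q(dK)$, none of which is deep. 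Everything else is a substitution.
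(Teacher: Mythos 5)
Your proposal is correct and follows essentially the same route as the paper: verify \eqref{intas1} and $s_0=\infty$, compute $\Lambda(\F'_y)\equiv\gamma_1$ so that $\tau(\Lambda(\F'_y))=p/\gamma_1$ and $\nu^*=(p/\gamma_1)\int\I\{\lambda_d(K\cap W)\in\cdot\}\,\Lambda(dK)$, then invoke Theorem~\ref{booleth}. The one step where your sketch is incomplete is the $\sigma$-finiteness of $\Lambda$: a unit-cube partition together with \eqref{e4.2} only controls the grains with $\lambda_d(K)>0$, and if $\Q$ puts infinite ($\sigma$-finite) mass on sets of volume zero, the translated measure \eqref{lambdarep} need not be $\sigma$-finite at all. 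The paper resolves this by first replacing $\Q$ with $\I\{\lambda_d(K)>0\}\,\Q(dK)$, which leaves the distribution of $\lambda_d(Z\cap W)$ unchanged; after that reduction your unit-cube argument (covering by $\{K:\lambda_d(K\cap C_n)>1/m\}$) does go through. With that one-line reduction added, everything else in your argument matches the paper's proof.
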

\begin{proof} We wish to apply Theorem \ref{booleth} in the
case $\rho=\lambda_d(W\cap\cdot)$.

Set $\Q'(dK):=\I\{\lambda_d(K)>0\}\,\Q(dK)$ and 
$\Lambda'(dK):=\I\{\lambda_d(K)>0\}\,\Lambda(dK)$ and
let $\eta'$ be a Poisson process with intensity measure
$\Lambda'$. Then $\lambda_d(Z(\eta')\cap W)$ has the same distribution
as $\lambda_d(Z(\eta)\cap W)$. Hence we can assume without loss of
generality that $\lambda_d(K)>0$ for $\Q$-a.e.\ $K$.
In particular $\Lambda$ is then $\sigma$-finite.

For each Borel set $K\subset \R^d$ we have that
\begin{align*}
\int\lambda_d(W\cap (K+x))\,dx=\iint \I\{y\in K+x\}\I\{y\in W\}\,dy\,dx\\
=\iint \I\{y\in K\}\I\{y+x\in W\}\,dy\,dx.
\end{align*}
By Fubini's theorem and \eqref{lambdarep} we obtain that
\begin{align*}
\int\lambda_d(W\cap K)\,\Lambda(dK)=\lambda_d(W)\int \lambda_d(K)\,\Q(dK),
\end{align*}
so that \eqref{e4.2} implies assumption \eqref{intas1}.

Let $s>0$. Then
\begin{align*}
\iint&\I\{\lambda_d((K+x)\cap W)>1\}
e^{s\lambda_d((K+x)\cap W)}\,dx\,\Q(dK)\\
&\le e^{s\lambda_d(W)} \iint\lambda_d((K+x)\cap W)\,dx\,\Q(dK)\\
&=e^{s\lambda_d(W)}\lambda_d(W) \int\lambda_d(K)\,\Q(dK)<\infty.
\end{align*}
Therefore we have $s_0=\infty$, where $s_0$ is given by \eqref{intas2}.

As at \eqref{e3.4} we define $\F^d_x:=\{K\in\F^d:x\in K\}$ for $x\in\R^d$.
From \eqref{lambdarep} we obtain that
\begin{equation}\label{lambdastat}
\Lambda\big(\F^d_x\big)=\Lambda\big(\F^d_0\big)=\int\lambda_d(K)\,\Q(dK)=\gamma_1.
\end{equation}
Hence $\tau\big(\Lambda\big(\F^d_x\big)\big)=p/\gamma_1$. Therefore the measure
$\nu^*$ defined by \eqref{levyanalog} is given by
\begin{align}\label{e4.33}
\nu^*=\frac{p}{\gamma_1}\int \I\{\lambda_d(K\cap W)\in\cdot\}\,\Lambda(dK).
\end{align}
Hence Theorem \ref{booleth} implies the assertion.
\end{proof}

The right-hand side of the concentration inequality provided by Theorem \ref{clasboole} is of
a rather complicated form.  In the sequel we shall derive more explicit versions.
We use the function $\psi$ defined by \eqref{epsi} and the constant
\begin{align*}
\gamma_2:=\int\lambda_d(K)^2\,\Q(dK).
\end{align*}

\begin{Cor}\label{corbounded2}  Assume that \eqref{e4.2} holds and 
that $a>0$ is such that $\lambda_d(K\cap W)\le a$ for 
$\Lambda$-a.e.\ $K\in\F^d$. Then $F=\lambda_d(Z\cap W)$ satisfies
\begin{align}\label{e4.60}
\P(F-\E[F]\ge r)
&\le \exp\bigg[\frac{r}{a}\,\psi\bigg(\frac{r}{p\lambda_d(W)}\bigg)\bigg],
\quad r>0,\\\label{e4.62}
\P(F-\E[F]\ge r)
&\le \exp\bigg[\frac{r}{a}\,\psi\bigg(\frac{a\gamma_1r}{p\lambda_d(W)\gamma_2}\bigg)\bigg],
\quad r>0.
\end{align}
\end{Cor}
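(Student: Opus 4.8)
The plan is to derive both inequalities from Corollary \ref{corbounded} applied to the Boolean model of the previous section, using the explicit form of $\nu^*$ computed in the proof of Theorem \ref{clasboole}. First I would record that under the translation invariance \eqref{lambdarep} we have $\tau(\Lambda(\F^d_x))=p/\gamma_1$ for every $x$, so that \eqref{e4.33} gives $\nu^*=\frac{p}{\gamma_1}\int\I\{\lambda_d(K\cap W)\in\cdot\}\,\Lambda(dK)$. Since $\lambda_d(K\cap W)\le a$ $\Lambda$-a.e.\ by hypothesis, the boundedness assumption of Corollary \ref{corbounded} is met with the same constant $a$, provided $\nu^*\ne 0$; and $\nu^*\ne 0$ because $\gamma_1>0$ forces $\int\lambda_d(W\cap K)\,\Lambda(dK)=\lambda_d(W)\gamma_1>0$ (using $\lambda_d(W)>0$ and the computation in the proof of Theorem \ref{clasboole}).

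Next I would compute the moments $m_i=\int_0^\infty u^i\,\nu^*(du)$ for $i=0,1,2$ in terms of $p$, $\gamma_1$, $\gamma_2$ and $\lambda_d(W)$. For $i=1$, Fubini's theorem and \eqref{lambdarep} give $m_1=\frac{p}{\gamma_1}\int\lambda_d(K\cap W)\,\Lambda(dK)=\frac{p}{\gamma_1}\lambda_d(W)\gamma_1=p\lambda_d(W)$, which is exactly $\E[F]$. For \eqref{e4.60} one then takes $i=1$ in Corollary \ref{corbounded}, since $a^{i-1}=a^0=1$, and obtains $\P(F-\E[F]\ge r)\le\exp[\frac{r}{a}\psi(r/m_1)]=\exp[\frac{r}{a}\psi(r/(p\lambda_d(W)))]$. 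For \eqref{e4.62} one takes $i=2$, which needs $m_2=\frac{p}{\gamma_1}\int\lambda_d(K\cap W)^2\,\Lambda(dK)$; here I would use the translation structure to write $\int\lambda_d((K+x)\cap W)^2\,dx\,\Q(dK)$ and bound $\lambda_d((K+x)\cap W)\le\lambda_d(K)$ under the integral, which combined with $\int\lambda_d((K+x)\cap W)\,dx=\lambda_d(W)\lambda_d(K)$ yields $m_2\le\frac{p}{\gamma_1}\lambda_d(W)\gamma_2$. Feeding $a^{i-1}=a$ and this bound on $m_2$ into Corollary \ref{corbounded}, and using that $z\mapsto\psi(z)$ is decreasing so that replacing $m_2$ by a larger quantity only increases the bound, gives $\P(F-\E[F]\ge r)\le\exp[\frac{r}{a}\psi(a\gamma_1 r/(p\lambda_d(W)\gamma_2))]$.

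The main obstacle I anticipate is not any single deep step but making sure the bookkeeping with the three versions of Corollary \ref{corbounded} is correct: one must check that the monotonicity of $\psi$ is in the right direction to allow the replacement of $m_2$ by its upper bound (it is, since $\psi$ is nonincreasing on $(0,\infty)$ and $\psi(z)<0$ for $z>0$, so a larger argument gives a larger, i.e.\ less negative, value and hence a valid upper bound), and that the hypothesis $\nu^*\ne 0$ is genuinely available. A minor point to handle carefully is the case $i=2$ with $\gamma_2=\infty$: then \eqref{e4.62} is vacuous since the right-hand side is $\exp[\frac{r}{a}\psi(0)]=\exp[-\infty\cdot\text{something}]$—actually $\psi(0)=\infty$ makes the exponent $+\infty$, so the bound is trivially true, and one should simply note this or restrict to $\gamma_2<\infty$. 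Apart from these checks, the proof is a direct specialization, and I would phrase it as "the assertions follow from Corollary \ref{corbounded} with $i=1$ and $i=2$ respectively, upon noting $m_1=p\lambda_d(W)$ and $m_2\le p\lambda_d(W)\gamma_2/\gamma_1$."
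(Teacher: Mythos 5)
Your proposal is correct and follows essentially the same route as the paper: apply Corollary \ref{corbounded} with $i=1$ (using $m_1=p\lambda_d(W)$ computed from \eqref{e4.33} and \eqref{lambdarep}) and with $i=2$ (bounding $m_2\le p\lambda_d(W)\gamma_2/\gamma_1$ via $\lambda_d((K+x)\cap W)\le\lambda_d(K)$, then invoking the monotonicity of $\psi$). One cosmetic slip: since $\psi$ is decreasing, a larger \emph{argument} gives a \emph{smaller} value of $\psi$ --- the correct statement (which you also give) is that enlarging $m_2$ shrinks the argument $a r/m_2$ and hence enlarges $\psi$, so the resulting bound remains valid.
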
 
\begin{proof}  
We can apply Corollary \ref{corbounded}.
Using \eqref{e4.33} and \eqref{lambdarep} we obtain that
\begin{align*}
m_1&=\frac{p}{\gamma_1}\iint \lambda_d((K+x)\cap W)\,dx\,\Q(dK)
=p\lambda_d(W).
\end{align*}
Inequality \eqref{e4.60} now follows from
the case $i=1$ of Corollary \ref{corbounded}.

Similarly  we obtain that
\begin{align*}
m_2&=\frac{p}{\gamma_1}\iint \lambda_d((K+x)\cap W)^2\,dx\,\Q(dK)\\
&\le \frac{p}{\gamma_1}\int\lambda_d(K)\int\lambda_d((K+x)\cap W)\,dx\,\Q(dK)
= \frac{p}{\gamma_1}\lambda_d(W)\int\lambda_d(K)^2\,\Q(dK).
\end{align*}
Since $\psi$ is decreasing, the inequality \eqref{e4.62} follows from
the case $i=2$ of Corollary \ref{corbounded}.
\end{proof}

%

\begin{Rem}\label{r4.5}\rm Suppose there exists $a>0$ such that
$\lambda_d(K)\le a$ for $\Q$-a.e.\ $K\in\F^d$. Then
\eqref{e4.62} is superior to  \eqref{e4.60}.
If there exist $\gamma>0$ and
$K_0\in\F^d$ with $\lambda_d(K_0)>0$ such that $\Q=\gamma\delta_{K_0}$, then
both inequalities yield
\begin{align}\label{e4.67}
\P(F-\E[F]\ge r)
\le \exp\bigg[\frac{r}{\lambda_d(K_0)}\,\psi\bigg(\frac{r}{p\lambda_d(W)}\bigg)\bigg],
\quad r>0.
\end{align}
\end{Rem}

\begin{Rem}\label{Rexactas}\rm
The estimate \eqref{e4.67} 
is quite sharp. This can be seen by a
comparison with the concentration of a Poisson distributed random
variable $X$. Lemma 1.2 in \cite{Pen} provides the bound
$\P(X-\E[X]\ge r)\le\exp\big(r\cdot\psi(r/\E[X])\big)$,
$r>0$. Furthermore, only a slight modification of the last bound
leads to the exact asymptotic 
$\P(X-\E[X]\ge r)\sim(2\pi(\E[X]+r))^{-1/2}\exp\big(r\cdot\psi(r/\E[X])\big)$, as
$r\to\infty$, see page 1225 in \cite{Hou}.
\end{Rem}

\begin{Rem}\rm Choosing $a=\lambda_d(W)$ in \eqref{e4.60} yields
\begin{align}\label{e4.677}
\P(F-\E[F]\ge r)
&\le \exp\bigg[\frac{r}{\lambda_d(W)}\,\psi\bigg(\frac{r}{p\lambda_d(W)}\bigg)\bigg],\quad r>0.
\end{align}
The advantage of this result is that it holds under the only assumption \eqref{e4.2}.
The disadvantage is the occurence of $\lambda_d(W)^{-1}$ as a factor of $r$ outside
the logarithmic term.
This is in contrast with the situation in Remark \ref{r4.5}.
\end{Rem}

If $\lambda_d(\cdot)$ is not essentially bounded w.r.t.\ $\Q$,
we need an exponential moment assumption on $\Q$ to improve 
\eqref{e4.677} at least partially.
Define a function
$\tilde{h}\colon[0,\infty)\to[0,\infty]$ by
\begin{align}\label{e4.58}
\tilde{h}(s):=\int \lambda_d(K)\big(e^{s\lambda_d(K)}-1)\,\Q(dK),\quad s\ge 0.
\end{align}

\begin{Cor}\label{c4.3} Assume that \eqref{e4.2} holds.
Then the Poisson functional $F=\lambda_d(Z\cap W)$ satisfies
\begin{align*}
\P(F-\E[F]\ge r)\le\exp\bigg(-\int^r_0 \tilde{h}^{-1}\bigg(\frac{\gamma_1u}{p\lambda_d(W)}\bigg)\,du\bigg), \quad r>0.
\end{align*}
\end{Cor}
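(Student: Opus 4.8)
The plan is to deduce the inequality from Theorem~\ref{booleth2} by specialising to $\rho:=\lambda_d(W\cap\cdot)$, exactly as in the proof of Theorem~\ref{clasboole}. First I would recall, as established there, that assumption~\eqref{intas1} holds, that $s_0=\infty$, and that the measure $\nu^*$ from \eqref{levyanalog} is given by \eqref{e4.33}. Since $\int\rho(K)\,\Lambda(dK)=\lambda_d(W)\gamma_1>0$ by \eqref{e4.2}, the argument in the proof of Theorem~\ref{booleth2} gives $\nu^*(0,\infty)>0$, and hence, by monotone convergence, the function $h$ of \eqref{e2.83} satisfies $h(s_0-)=\lim_{s\to\infty}h(s)=\infty$. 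Consequently Theorem~\ref{booleth2} already yields $\P(F-\E[F]\ge r)\le\exp(-\int_0^r h^{-1}(u)\,du)$ for \emph{all} $r>0$, and it remains only to bound $h^{-1}$ from below by $u\mapsto\tilde h^{-1}(\gamma_1u/(p\lambda_d(W)))$.

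The heart of the argument is the pointwise estimate $h(s)\le(p\lambda_d(W)/\gamma_1)\,\tilde h(s)$, $s\ge0$. To obtain it I would write, using \eqref{e4.33} and then \eqref{lambdarep}, that $h(s)=\frac{p}{\gamma_1}\iint\lambda_d((K+x)\cap W)\big(e^{s\lambda_d((K+x)\cap W)}-1\big)\,dx\,\Q(dK)$, and then use that $u\mapsto e^{su}-1$ is nondecreasing together with the trivial bound $\lambda_d((K+x)\cap W)\le\lambda_d(K)$ to pull the factor $e^{s\lambda_d(K)}-1$ out of the $x$-integral. The remaining integral is $\int\lambda_d((K+x)\cap W)\,dx=\lambda_d(K)\lambda_d(W)$ --- the Fubini computation already performed in the proof of Theorem~\ref{clasboole} --- and integrating over $\Q$ reconstructs $\tilde h(s)$ from \eqref{e4.58}, up to the factor $p\lambda_d(W)/\gamma_1$.

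Finally, writing $c:=p\lambda_d(W)/\gamma_1$, the bound $h\le c\,\tilde h$ gives the inclusion $\{s\ge0:h(s)\ge u\}\subseteq\{s\ge0:\tilde h(s)\ge u/c\}$ for every $u\ge0$, so that the generalized inverses (both taken as $\inf$, with $\inf\emptyset:=\infty$) satisfy $h^{-1}(u)\ge\tilde h^{-1}(u/c)=\tilde h^{-1}(\gamma_1u/(p\lambda_d(W)))$; substituting this into the bound from Theorem~\ref{booleth2} completes the proof. I do not expect a serious obstacle: the only genuinely new ingredient is the domination $h\le c\,\tilde h$, which rests on nothing deeper than $\lambda_d((K+x)\cap W)\le\lambda_d(K)$ and the monotonicity of an elementary function, while the verifications that $s_0=\infty$, that $h(s_0-)=\infty$, and that passing to generalized inverses reverses the inequality are routine bookkeeping taken over from earlier proofs.
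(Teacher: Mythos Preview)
Your proposal is correct and follows essentially the same route as the paper: apply Theorem~\ref{booleth2} with the identifications from the proof of Theorem~\ref{clasboole}, bound $h(s)\le (p\lambda_d(W)/\gamma_1)\,\tilde h(s)$ via $\lambda_d((K+x)\cap W)\le\lambda_d(K)$ and the Fubini identity $\int\lambda_d((K+x)\cap W)\,dx=\lambda_d(K)\lambda_d(W)$, invert, and use $\nu^*((0,\infty))>0$ together with $s_0=\infty$ to conclude that the bound holds for all $r>0$.
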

\begin{proof} We wish to apply Theorem \ref{booleth2}. Recall the
definition \eqref{e2.83} of the function $h$. By \eqref{e4.33}
we have for each $s\ge 0$ that
\begin{align*}
h(s)
&=\frac{p}{\gamma_1}\iint\lambda_d((K+x)\cap W)\big(e^{s\lambda_d((K+x)\cap W)}-1\big)
\,dx\,\Q(dK)\\
&\le\frac{p}{\gamma_1}\iint\lambda_d((K+x)\cap W)\big(e^{s\lambda_d(K)}-1\big)
\,dx\,\Q(dK)
=\frac{p\lambda_d(W)}{\gamma_1}\tilde{h}(s).
\end{align*}
Hence we have for each $r>0$ that 
$h^{-1}(r)\ge \tilde{h}^{-1}(\gamma_1r/(p\lambda_d(W)))$, 
so that Theorem \ref{booleth2} and the identity $s_0=\infty$ imply the assertion
once we have shown that $\lim_{s\to\infty} h(s)=\infty$. But this
follows from $\nu^*((0,\infty))>0$ (a consequence of $\gamma_1>0$).
\end{proof}

We illustrate Corollary \ref{c4.3} with two examples. Let $c:=\gamma_1/(p\lambda_d(W))$.

\begin{Ex}\label{Egamma}\rm Assume that $\Q(\{K:\lambda_d(K)\in du\})\le\alpha u^{-1}e^{-\beta u}\,du$,
where $\alpha,\beta>0$. On the right-hand side we have here the L\'evy measure of the gamma
distribution with shape parameter $\alpha>0$ and rate parameter
$\beta>0$; see e.g.\ \cite[Example 15.6]{LastPenrose17}. For instance, this assumption is satisfied
with $\alpha=1/d$ 
if $\Q\le \int \I\{rK_0\in\cdot\}\,r^{-1}e^{-\beta r^d}\,dr$, where $K_0\in\F^d$ has
$\lambda_d(K_0)=1$. Let $s\in(0,\beta)$. A simple calculation shows that
$\tilde{h}(s)\le \alpha/(\beta-s)-\alpha/\beta$, so that
\begin{align*}
\tilde{h}^{-1}(r)\ge \beta-\frac{\alpha\beta}{\beta r+\alpha},\quad r>0.
\end{align*}
It follows that 
\begin{align*}
\int^r_0\tilde{h}^{-1}(u)\,du\ge\beta r-\alpha\log\bigg(1+\frac{\beta r}{\alpha}\bigg),\quad r>0.
\end{align*}
Therefore we obtain from Corollary \ref{c4.3}
\begin{align*}
\P(F-\E[F]\ge r)\le\exp\bigg(-\beta r+\frac{\alpha}{c}\log\bigg(1+\frac{\beta c r}{\alpha}\bigg)\bigg).
\end{align*}
\end{Ex}

\begin{Ex}\label{Egamma2}\rm Assume that 
  $\Q(\{K:\lambda_d(K)\in du\})\le\beta^\alpha/\Gamma(\alpha)
  u^{\alpha-1}e^{-\beta u}\,du$, where $\alpha,\beta>0$. On the
  right-hand side we have here the gamma distribution with shape
  parameter $\alpha$ and scale parameter $\beta$.  A similar
  calculation as in Example \ref{Egamma}, 
yields $\tilde{h}(s)\le \alpha\beta^\alpha/(\beta-s)^{\alpha+1}-\alpha/\beta$
for $s\in(0,\beta)$, so that
\begin{align*}
\tilde{h}^{-1}(r)\ge \beta-\bigg(\frac{\alpha\beta^{\alpha+1}}{\beta r+\alpha}\bigg)^{1/(\alpha+1)},\quad r>0,
\end{align*}
and finally, by Corollary \ref{c4.3},
\begin{align}\label{gambound}
  \P(F-\E[F]\ge r)\le\exp\bigg(-\beta
  r+\frac{\alpha+1}{c}\bigg(\bigg(\frac{\alpha+\beta c
    r}{\alpha}\bigg)^{\alpha/(\alpha+1)}-1\bigg)\bigg).
\end{align}
\end{Ex}

\begin{Rem}\rm 
Examples \ref{Egamma} and \ref{Egamma2} are geometrically
quite different. 
Assume in the second example that
$\Q(\{K:\lambda_d(K)\in du\})=\beta^\alpha/\Gamma(\alpha)
  u^{\alpha-1}e^{-\beta u}\,du$. Then each bounded set contains
a finite number of grain centers (at least under a weak regularity assumption
on $\Q$). Ignoring overlapping, each grain contributes
a gamma distributed volume. 
Assume in the first example that
$\Q(\{K:\lambda_d(K)\in du\})= u^{-1}e^{-\beta u}\,du$.
Then each measurable set $B\subset\R^d$ with $0<\lambda_d(B)<\infty$
contains infinitely many grain centers. However, the sum
of the volumes of balls centered in $B$
follows a gamma distribution
with scale parameter $\beta$ and shape parameter $\lambda_d(B)$; 
see \cite[Example 15.6]{LastPenrose17}.
Roughly speaking, $Z\cap W$ might be interpreted as a finite union of
random sets whose volumes are approximately gamma distributed.
This might explain that the leading terms in both concentration inequalities
are the same. 
\end{Rem}

Our bounds of Corollary \ref{corbounded2} improve significantly
Theorem 3 in \cite{Hei} which deals with the stationary Boolean model
in $\R^d$ and which assumes $\Q$ to be a probability measure. The tail
bound in \cite{Hei} is only of order $\exp\big(-\O(r)\big)$ and
therefore not able to reproduce the tail behaviour of the Poisson
distribution in the special setting of Remark \ref{r4.5}. Further, the
constants we use arise naturally from the model and are much less
involved than the ones in \cite{Hei}. Moreover, we do not require that
the moment-generating function of $\lambda_d(Z_0)$ exists but only
make the milder moment assumptions $\gamma_1<\infty$, respectively
$\gamma_2<\infty$.

We note that the general concentration inequalities derived in
\cite{BP} can be applied to some configurations of the stationary Boolean model in $\R^d$, too. 
At least in the case of bounded grains, this application already improves 
the correspondent result of \cite{Hei}. However, the functionals considered in \cite{BP} appear 
unable to incorporate the volume
fraction. To be more precise, in the setting of Corollary \ref{corbounded2}, 
the bound \eqref{e4.62} 
is superior to the result
\begin{align*}
\P(F-\E[F]\ge r)&\le\exp\bigg(\frac{r}{a}
\cdot\psi\bigg(\frac{ar}{\gamma_2 \lambda_d(W)}\bigg)\bigg),\quad r\ge0,
\end{align*}
obtained from Corollary 3.3 in \cite{BP} by the bound
\begin{align*}
\int(D_KF)^2\,\Lambda(dK)&\le\iint((K+x)\cap W)^2\,dx\,\Q(dK)\le\lambda_d(W)\gamma_2.
\end{align*}

Finally, we apply Theorem \ref{thmecke}.

\begin{Prop}\label{mcvol}
Assume that (\ref{e4.2}) holds. Then
\begin{equation}\label{e4.11}
  \P[F-\E[F]\ge r]\le\exp\bigg(-\frac{\gamma_1}{p\lambda_d(W)}
\bigg(r+\E[F]\log\bigg(\frac{\E[F]}{r+\E[F]}\bigg)\bigg)\bigg),\quad r>0.
\end{equation}
\end{Prop}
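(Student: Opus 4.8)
The plan is to apply Theorem~\ref{thmecke} to the Poisson functional $F=\lambda_d(Z\cap W)$ (on the phase space $\F^d$), with the function $g(K):=\rho^*(K)$ and the constants $a:=p\lambda_d(W)/\gamma_1$ and $b:=0$, where as in Theorem~\ref{clasboole} we put $\rho:=\lambda_d(W\cap\cdot)$ and, passing to the restriction of $\Lambda$ to $\{\lambda_d(K)>0\}$ as in that proof, may assume $\Lambda$ to be $\sigma$-finite. The key structural point is that $\rho$ is a \emph{finite} measure of total mass $\lambda_d(W)$, so $\rho(K)\le\lambda_d(W)$ for every $K\in\F^d$; hence $0\le F\le\lambda_d(W)$ is bounded, so $F\in L^2(\P)$, and estimating the relevant difference operators exactly as in the first part of the proof of Theorem~\ref{booleth} (using $0\le D_KF\le\rho(K)$ together with $\int\rho(K)\,\Lambda(dK)<\infty$, which holds by \eqref{e4.2}) gives $s_F=\infty$.

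Next I would verify the three hypotheses of Theorem~\ref{thmecke}. The condition $DF\ge0$ is immediate from Lemma~\ref{l3.2}, since $D_Kf(\mu)=\rho(K)-\rho(Z(\mu)\cap K)\ge0$. For \eqref{c1as}, I would integrate the bound of Lemma~\ref{difflem} over $t\in[0,1]$ and use $\Lambda(\F^d_y)\equiv\gamma_1$ (see \eqref{lambdastat}), so $\tau(\Lambda(\F^d_y))=p/\gamma_1$ and the left-hand side of \eqref{c1as} is at most $\int_K\tau(\Lambda(\F^d_y))\,\rho(dy)=(p/\gamma_1)\rho(K)=\rho^*(K)=g(K)$, exactly as in the proof of Theorem~\ref{booleth}. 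For \eqref{e2.10}, note that for every particle $K\in\eta$ one has, by definition of $f$ and $Z$ and additivity of the finite measure $\rho$,
\begin{equation*}
D_Kf(\eta-\delta_K)=\rho(Z(\eta))-\rho(Z(\eta-\delta_K))=\rho\big(K\setminus Z(\eta-\delta_K)\big),
\end{equation*}
and the sets $K\setminus Z(\eta-\delta_K)$, $K\in\eta$, are pairwise disjoint subsets of $Z$ --- a point of $K\setminus Z(\eta-\delta_K)$ lies in $K$ but in no other particle of $\eta$ --- so that $\sum_{K\in\eta}\rho\big(K\setminus Z(\eta-\delta_K)\big)\le\rho(Z)=F$. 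Combining this with $g(K)=(p/\gamma_1)\rho(K)\le(p/\gamma_1)\lambda_d(W)$ yields
\begin{equation*}
\int D_Kf(\eta-\delta_K)\,g(K)\,\eta(dK)\le\frac{p\lambda_d(W)}{\gamma_1}\sum_{K\in\eta}\rho\big(K\setminus Z(\eta-\delta_K)\big)\le\frac{p\lambda_d(W)}{\gamma_1}\,F=aF,
\end{equation*}
i.e.\ \eqref{e2.10} holds with $a=p\lambda_d(W)/\gamma_1$ and $b=0$.

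Since $s_F=\infty\ge a^{-1}$, inequality \eqref{e2.11} of Theorem~\ref{thmecke} applies with $b=0$ and yields
\begin{equation*}
\P(F-\E[F]\ge r)\le\exp\bigg(-\frac{\gamma_1}{p\lambda_d(W)}\Big[r+\E[F]\log\Big(\tfrac{\E[F]}{r+\E[F]}\Big)\Big]\bigg),\quad r>0,
\end{equation*}
which is precisely \eqref{e4.11}; note $\E[F]=p\lambda_d(W)>0$ by \eqref{e4.2} and $\lambda_d(W)>0$, so the degenerate case in Theorem~\ref{thmecke} does not occur.

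I expect the crux to be the disjointness observation $\sum_{K\in\eta}\rho\big(K\setminus Z(\eta-\delta_K)\big)\le\rho(Z)$, which turns the Mecke-type sum in \eqref{e2.10} into $F$, together with exploiting the uniform bound $\rho(K)\le\lambda_d(W)$ to absorb the weight $g(K)$ into $aF$ with vanishing additive constant $b=0$. The remaining steps are bookkeeping resting on Lemmas~\ref{l3.2} and \ref{difflem} and on the integrability estimates already carried out in the proofs of Theorems~\ref{clasboole} and \ref{booleth}.
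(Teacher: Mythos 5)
Your proof is correct and follows essentially the same route as the paper: apply Theorem~\ref{thmecke} with the bound $g(K)\le p\lambda_d(W)/\gamma_1$ coming from Lemma~\ref{difflem} and \eqref{lambdastat}, and with \eqref{e2.10} verified via the disjointness of the exclusively covered pieces $K\setminus Z(\eta-\delta_K)$, giving $a=p\lambda_d(W)/\gamma_1$ and $b=0$. The only (harmless) deviation is that you check the integrability hypotheses directly from the boundedness $F\le\lambda_d(W)$ and $D_KF\le\rho(K)$, whereas the paper invokes the truncation argument from the proof of Theorem~\ref{booleth}.
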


\begin{proof}
By Lemma \ref{difflem} and equation \eqref{lambdastat}, we have, for 
$\Lambda$-a.e.\ $K\in\F^d$,
\begin{equation*}
\int_0^1\int_\mathbf{N}D_Kf(\eta_t+\mu)\,\Pi_{(1-t)\Lambda}(d\mu)\,dt\le\rho(K\cap W)
\frac{p}{\gamma_1}\le\lambda_d(W)\frac{p}{\gamma_1}.
\end{equation*}
Using the properness of $\eta$, we also get the bound
\begin{equation*}
\int D_Kf(\eta-\delta_K)\,\eta(dK)=\sum_{K\in\hs\eta}D_Kf(\eta-\delta_K)
=\sum_{K\in\hs\eta}\lambda_d\Big((K\cap W)\setminus\bigcup_{L\in\hs\eta-\delta_K}L\Big)\le F.
\end{equation*}
The assertion now follows from Theorem \ref{thmecke}
using the same truncation method as in the proof of Theorem \ref{booleth}.
\end{proof}

We note that $\int D_Kf(\eta-\delta_K)\,\eta(dK)$ actually equals the volume of the set of points 
which are covered by exactly one grain. Thus, as the Mecke formula
allows us to employ the functional $\int D_Kf(\eta-\delta_K)\,\eta(dK)$, we are equipped with a finer
tool to respect the interplay between the grains of $Z$.

\begin{Ex}\label{Eexp}\rm Let $\Q(\{K:\lambda_d(K)\in du\})=\beta e^{-\beta u}\,du$, that is the volume of the typical grain is exponentially distributed. The larger $\beta$ (and therefore the smaller $p$) the better is the specific bound (\ref{gambound}) in comparison to the general bound (\ref{e4.11}). If $\beta>0.14$, i.e. $p<0.9992$, then (\ref{gambound}) outplays (\ref{e4.11}) uniformly. If $\beta<0.13$, it is the other way round. Between, (\ref{e4.11}) might be better only for small values of $r$. Comparing the more general bound (\ref{e4.677}) with (\ref{e4.11}), we see the same principle. The latter wins when $p$ is large.
\end{Ex}

\bigskip
\noindent {\bf Acknowledgment:} We wish to thank S. Bachmann and G. Peccati for discussing with us an early version of their paper \cite{BP}. This work was supported
by the German Research Foundation (DFG) through the research unit
``Geometry and Physics of Spatial Random System''
under the grant LA 965/6-2.

\end{document}